\newtheorem{theorem}{Theorem}[section]
\newtheorem{corollary}[theorem]{Corollary}
\newtheorem{proposition}[theorem]{Proposition}
\theoremstyle{remark}
\newtheorem{remark}{Remark}[section]
\theoremstyle{plain}
\newtheorem{teorem}{Teorem}[section]
\newtheorem{lemma}[teorem]{Lemma}
\DeclareMathOperator{\Div}{div}
\DeclareMathOperator{\grad}{grad}
\DeclareMathOperator{\curl}{curl}
\DeclareMathOperator{\defo}{def}
\DeclareMathOperator{\Skew}{skew}
\DeclareMathOperator{\op}{op}
\DeclareMathOperator{\tr}{tr}
\DeclareMathOperator{\transp}{\textsc{t}}
\renewcommand{\div}{\Div}
\newcommand{\rmd}{\mathrm{d}}
\newcommand{\rmH}{\mathrm{H}}
\newcommand{\rmL}{\mathrm{L}}
\newcommand{\bbA}{\mathbb{A}}
\newcommand{\bbC}{\mathbb{C}}
\newcommand{\bbM}{\mathbb{M}}
\newcommand{\bbN}{\mathbb{N}}
\newcommand{\bbR}{\mathbb{R}}
\newcommand{\bbS}{\mathbb{S}}
\newcommand{\bbV}{\mathbb{V}}
\newcommand{\bbZ}{\mathbb{Z}}
\newcommand{\calO}{\mathcal{O}}
\newcommand{\calQ}{\mathcal{Q}}
\newcommand{\calR}{\mathcal{R}}
\newcommand{\calT}{\mathcal{T}}
\newcommand{\beq}{\begin{equation}}
\newcommand{\eeq}{\end{equation}}
\newcommand{\ts}{\textstyle}
\newcommand{\curltcurl}{\curl  \transp \curl}
\newcommand{\id}{\mathrm{id}}
\newcommand{\jump}[1]{[\![ #1 ]\!]}
\newcommand{\myand}{ \quad \textrm{and} \quad }
\newcommand{\myor}{ \quad \textrm{or} \quad }
\newcommand{\infsup}[4]{\inf_{#1}\sup_{#2}\frac{#3}{#4}}
\newcommand{\cleq}{\preccurlyeq}
\newcommand{\cgeq}{\succcurlyeq}
\newcommand{\ceq}{\approx}
\newcommand{\describe}[1]{{\bf (#1)}}
\title{\vspace*{-1.5cm}
\begin{flushright}
\begin{minipage}{4.7cm}
\tiny
Revised version of:\\
\mbox{}\\
{\sc Dept. of Math./CMA \hfill University of Oslo\\
Pure Mathematics \hfill     No~13\\
ISSN 0806--2439 \hfill     May 2008}
\end{minipage}
\end{flushright}
\vskip 1cm
On the linearization of Regge calculus}
\author{
 Snorre H. {\sc Christiansen}\footnote{CMA, University of Oslo, PO Box 1053 Blindern, NO-0316 Oslo, Norway. email : {\tt snorrec@math.uio.no}}
}
\begin{document}

\date{}

\maketitle

\begin{abstract}
We study the linearization of three dimensional Regge calculus around Euclidean metric. We provide an explicit formula for the corresponding quadratic form and relate it to the $\curltcurl$ operator which appears in the quadratic part of the Einstein-Hilbert action and also in the linear elasticity complex. We insert Regge metrics in a discrete version of this complex, equipped with densely defined and commuting interpolators. We show that the eigenpairs of the $\curltcurl$ operator, approximated using the quadratic part of the Regge action on Regge metrics, converge to their continuous counterparts, interpreting the computation as a non-conforming finite element method. 
\end{abstract}


\section{Introduction}
Regge calculus \cite{Reg61} is a combinatorial approach to Einstein's theory of general relativity \cite{Wal84}. Space-time is represented by a simplicial complex. Given this simplicial complex, a finite dimensional space of metrics is defined, each metric being determined by a  choice of edge lengths. We call such metrics Regge metrics. A functional, defined on this space of metrics and mimicking the Einstein-Hilbert action, is provided. We call this functional the Regge action. A critical point of the Regge action on the space of Regge metrics is generally believed to be a good approximation to a true solution of Einstein's equations \cite{MisThoWhe73}.

Regge calculus (RC) is quite popular in studies of quantum gravity \cite{RegWil00}. Its discrete nature also makes it a natural candidate for the construction of efficient algorithms to simulate the classical field equations, a possibility expressed already in the last sentence of Regge's paper. In this direction we are aware of, in particular \cite{Por87a}\cite{Por87b}\cite{BarEtAl97}\cite{GenMil98}\cite{Gen02}. However, it seems that the bulk of numerical relativity computations are performed using other methods. One difficulty with simulating Einstein's equations is the gauge freedom (diffeomorphism invariance) which imposes constraints on the evolution. Hyperbolicity in this context is a delicate matter \cite{Reu04}. Progress on the simulation of merging black holes \cite{Pre06}, seems to have been achieved in large part by judiciously choosing which partial differential equations to solve (in particular the gauge conditions), so that a number of traditional discretization philosophies, including finite difference, finite element and collocation methods,  are successfully applied today \cite{Alc08}\cite{GraNov09}\cite{BauSha10}, in support of the emergent field of gravitational wave astronomy \cite{SatSch09}.

We hope that this paper can contribute to developing RC into a good alternative, or facilitate the integration of some of its appealing features into currently used methods. Its geometric ``coordinate free'' nature would make it a structure-preserving method in the sense of \cite{HaiLubWan06}. Thus our motivations are close in spirit to for instance \cite{Fra06}\cite{RicLub08}. The kind of variational structure that underlies RC has become a governing design principle both for finite element methods and integration of ordinary differential equations \cite{CiaLio91}\cite{MarWes01}, so that insights from these mature fields could well inspire decisive improvements in RC. 

We are not aware of any stringent convergence results for RC, except those of \cite{CheMulSch84}. There, it is shown that for any given smooth metric, the Regge metrics interpolating it, have a curvature (defined by Regge calculus) which converges in the sense of measures, when the mesh width goes to $0$, to the curvature (defined in the standard way) of the given smooth metric. In numerical analysis this would be called a consistence result. In general, consistence is only a step towards proving convergence of a given numerical method. We also point out that the convergence of RC is discussed, in less stringent terms, in the physics literature, e.g. \cite{BreGen01} and references therein.

In \cite{Chr04M3AS} we related the space of Regge metrics to Whitney forms \cite{Wei52}\cite{Whi57}. As remarked in \cite{Bos88}, Whitney forms correspond to lowest order mixed finite elements \cite{RavTho77}\cite{Ned80}, the so-called edge and face elements, for which one has a relatively well developed convergence theory \cite{BreFor91}\cite{RobTho91}. More recently this analysis has been cast in the language of differential forms and related to Hodge theory \cite{Hip99}\cite{Chr07NM}\cite{ArnFalWin10}. We showed, in \cite{Chr04M3AS}, that there is a natural basis for the space of Regge metrics expressed in terms of Whitney forms and that second order differential operators restricted to Courant elements (continuous piecewise affine functions) are in one to one correspondence with linear forms on Regge metrics, edge elements and  Courant elements. This link integrates Regge calculus into the finite element framework. However we did not approach the question of curvature which is central to RC.

In this paper we further develop the theory of Regge elements. We first insert them in a complex of spaces equipped with densely defined interpolators providing commuting diagrams as in finite element exterior calculus \cite{ArnFalWin06}. The differential operators of this complex are those of linear elasticity. For the purposes of relativity, it appears that less regularity is required of the fields than for continuum mechanics, so that the discrete complex we obtain differs from those constructed in for instance \cite{ArnFalWin06IMA2}: the last two spaces in our complex consist of matrix and vector valued measures that cannot be represented by integrable functions. Next we provide results concerning the Regge action.

A priori it is not clear if RC should be considered a conforming or a non-conforming finite element method. Is the Regge action the restriction to Regge metrics of some extension by continuity of the Einstein-Hilbert action, to a large enough class of non-smooth metrics? One might compare with Wilson's lattice gauge theory discretization of the Yang-Mills equations \cite{Rot05}, where the discrete action is \emph{not} a simple restriction of the continuous one.  Indeed, seen as functionals, defined on the vector space of Lie-algebra valued (tensor product) Whitney forms, the Yang-Mills action is polynomial of order 4, whereas Wilson's action is transcendental, even in the case of Maxwell's equations \cite{ChrHal09BIT}.

Given a metric, the scalar curvature multiplied with the volume form provided by the metric, is a certain density on space-time depending non-linearly on the metric (and its derivatives). For a Regge metric, which has only partial continuity properties between simplexes, it is not clear which, if any, of the available expressions of the curvature in terms of the metric, make sense. Partial derivatives of discontinuous functions can be defined as distributions or currents, in the sense of Schwartz and de Rham, but for distributions, products are notoriously ill defined. In addition, if the background is only a piecewise affine manifold, the associated transitions between coordinate maps are Lipschitz but non-smooth, so that distribution theory seems inappropriate.

The arguments put forward by Regge to justify the definition of the Regge action are integral in nature. In RC there is a natural notion of parallel transport along paths that avoid the codimension-2 skeleton of the simplicial complex.  Around closed loops, this parallel transport behaves \emph{as if} there were a curvature, concentrated only on the codimension-2 skeleton, with the expression provided by RC in terms of deficit angles. See also the justifications provided in \cite{FriLee84}. To the author, it seems desirable that this \emph{ad hoc}  point of view, be related to the contemporary mathematical theory of partial differential equations. In this paper we provide results concerning linearization only, but holonomies will nevertheless play a pivotal role.

If we expand, as is done for instance in \cite{Hoo01}, the Einstein-Hilbert action in small perturbations around Minkowski space-time, the linear term is $0$ since the Minkowski metric solves the Einstein equations. The first non-trivial term is a quadratic form which we denote by $\calQ$. The Euler-Lagrange equations corresponding to finding critical points of $\calQ$, among ``all'' metrics, are nothing but the linearized Einstein equations, which describe the propagation of infinitesimal gravitational ripples on flat space-time. 

It appears\footnote{See the acknowledgement.} that the spatial part of $\calQ$ is associated with the $\curltcurl$ operator appearing in the linear elasticity complex \cite{ArnFalWin06IMA2}, where it encodes the Saint-Venant compatibility conditions. We show that this (spatial) quadratic form, defined a priori for smooth fields, has a natural extension to (spatial) Regge metrics. Moreover we show that this extension corresponds to the quadratic part of the Regge action, establishing that the first non-trivial terms (the second variations) of the Regge action and the Einstein-Hilbert action agree.

However, the natural Hilbert space on which the quadratic form is continuous (just) fails to contain the Regge elements! We argue that RC is a minimally non-conforming method. As a step towards an analysis of the convergence of numerical methods based on RC, we show that the eigenvalues for the $\curltcurl$ operator are well approximated with Regge elements. For an alternative convergence result concerning linearized RC, see \cite{BarWil88}.

The theory we develop is inspired by works on the eigenvalue problem for Maxwell's equations, \cite{Kik89}\cite{BofFerGasPer99}\cite{CaoFerRaf01} and also \cite{Chr07NM}\cite{ArnFalWin06}\cite{ChrWin10}. As for Maxwell's equations, the operator does not have a compact resolvent (due to the existence of an infinite dimensional kernel), so the basic theory \cite{BabOsb91} has to be amended. Indeed it has been shown that in this situation, stability is not sufficient to get eigenvalue convergence \cite{BofBreGas00}. At least two additional difficulties arise. First, linked to the above mentioned problem of hyperbolicity, is the fact that there are eigenvalues of arbitrary magnitude of both signs. One of the signs corresponds to modes that are excluded by the constraints in the continuous case. Due to the lack of sign, Cea type arguments valid for Maxwell's $\curl \curl$ operator (which is positive semi-definite), have to be replaced by inf sup conditions \cite{Bab71}\cite{Bre74}. Second is the (limit) non-conformity of the method. Central to the argument we develop is an analogue  for metrics of the Hodge decomposition of differential forms.

A number of interesting related results have been published while this paper was under review. We mention some, that come in addition to those already cited. RC has been described using dual tessellations \cite{DonMil08}, in a framework reminiscent of the discrete exterior calculus of \cite{DesHirLeoMar05}. In a similar vein of relating RC to notions of discrete mechanics, we also point out \cite{Ste09}, whereas \cite{Peu09} concerns a finite element point of view, as in \cite{Zum09}. Also of interest is \cite{BahDit10}. Numerical methods based on differential forms have been studied on manifolds \cite{HolSte10} and applied to general relativity in a simplified setting \cite{RicFra10}. Regge elements have been rediscovered as a tool for solving equations of elasticity \cite{CiaCia09}. Hodge decompositions of tensor fields, of the type used in this paper, have been studied in Lipschitz domains \cite{GeyKra09}. 

\paragraph{Layout.}
The paper is organized as follows. In section \ref{sec:reggeelt} we study Regge elements in finite element terms. We see what happens when we apply the Saint-Venant operator to them, and, based on the formula we obtain, insert them in a discrete elasticity complex. In section \ref{sec:lin} we relate the linearized Regge action to the $\curltcurl$ operator, showing that the second variations of the Regge action and the Einstein Hilbert action agree. In section \ref{sec:eigen} we study the discrete eigenvalue problem for the Saint-Venant operator on Regge elements. An abstract framework is introduced and then applied to our case.

\section{Regge elements and linear elasticity\label{sec:reggeelt}}

\paragraph{Basis and degrees of freedom.} 

We consider a space-slice $S$ which is a compact flat Riemannian 3-dimensional manifold without boundary. For definiteness we actually consider:
\begin{equation}
S = (\bbR/ l_1 \bbZ) \times (\bbR/ l_2 \bbZ) \times (\bbR/ l_3 \bbZ),
\end{equation}
for some positive reals $l_1, l_2, l_3$. On $S$ we have the Riemannian metric inherited from the standard Euclidean structure of $\bbR^3$. 

We put $\bbV= \bbR^3$ and let $\bbM$ be the space of $3\times 3$ real matrices. The subspace of $\bbM$ consisting of symmetric matrices is denoted $\bbS$, whereas that of antisymmetric matrices is denoted $\bbA$. Elements of $\bbV$ will be identified with $3 \times 1$ matrices called column vectors (and reals with $1 \times 1$ matrices). Thus the scalar product $v \cdot v'$ of two vectors $v,v' \in \bbV$, can also be written:
\begin{equation}
v \cdot v' = v^{\transp} v'.
\end{equation}

We let $C^\infty(S)$ denote the space of smooth real functions on $S$. The space of smooth vector fields on $S$ can be identified with $C^{\infty}(S) \otimes \bbV$. We regard the gradient of a function (at a point) to be a column vector, so that we have a map:
\begin{equation}
 \grad : C^\infty(S) \to C^{\infty}(S) \otimes \bbV.
\end{equation}
Likewise, $C^{\infty}(S) \otimes\bbS$ can be identified with the space of smooth symmetric $3 \times 3$ matrix fields.

We partition $S$ into tetrahedrons by a simplicial complex $\calT_h$. The set of $k$-dimensional simplexes in $\calT_h$ is denoted $\calT_h^k$. As is customary, the parameter $h$ denotes the largest diameter of a simplex of $\calT_h$. In \S \ref{sec:eigen}  we will be interested in convergence results as $h \to 0$, but until then our results concern a given $h$ representing a fixed mesh.

Regge metrics are symmetric matrix fields on $S$ that are piecewise constant with respect to $\calT_h$ and such that for any two tetrahedrons sharing a triangle as a common face, the tangential-tangential component of the metric is continuous across the face. Thus our metrics can be degenerate -- we do not impose any triangle inequalities, as would be necessary to define a distance from the metric. The continuity property imposed on Regge metrics can also be expressed by saying that the pullback of a metric, seen now as a bilinear form, to the interface between two tetrahedrons, is the same from both sides. This space of metrics has one degree of freedom per edge, which in the non-degenerate case can be taken to be the length (or length squared) of the edge, as defined by the metric. We proceed to give basic properties of this space, including our particular choice of basis and degrees of freedom.

In \cite{Chr04M3AS} we  related this space of metrics, which we denote by $X_h$,  to Whitney forms. For each vertex $x \in \calT^0_h$ let $\lambda_x$ denote the corresponding barycentric coordinate map. It is nothing but the continuous piecewise affine function taking the value $1$ at vertex $x$, and $0$ at other vertexes. Then the following family of metrics is a basis of $X_h$, indexed by edges $e \in \calT_h^1$:
\begin{equation}
\rho_e = 1/2 \ \big ( (\grad \lambda_{x_e})  (\grad \lambda_{y_e})^{\transp}  + (\grad \lambda_{y_e})  (\grad \lambda_{x_e})^{\transp} \big ),
\end{equation}
where the vertexes of the edge $e$ are denoted $x_e$ and $y_e$.

We define degrees of freedom as follows. For any edge $e \in \calT^1_h$ consider the linear form on smooth metrics:
\begin{equation}\label{eq:mue}
\mu_e : u \mapsto \int_0^1 (y_e - x_e)^{\transp} u(x_e + s (y_e - x_e)) (y_e - x_e) \rmd s. 
\end{equation}
One checks that on a tetrahedron these degrees of freedom are uni-solvent on the constant metrics. For two edges $e,e' \in \calT_h^1$ we have:
\begin{equation}
\mu_e(\rho_{e'}) = \left\{ 
\begin{array}{l l} 
0 & \textrm{ if } e \neq e',\\
1 & \textrm{ if } e = e'.
\end{array} \right.
\end{equation}
The degrees of freedom (\ref{eq:mue}) make sense for some non-smooth metrics as well, in particular elements of $X_h$. The interpolator associated with these degrees of freedom is the projection $I_h$ onto $X_h$, which to a symmetric matrix field $u$, associates the unique element $u_h \in X_h$ such that:
\begin{equation}
\forall e \in \calT^1_h \quad \mu_e(u_h) = \mu_e(u).
\end{equation}
The degrees of freedom do indeed guarantee tangential-tangential continuity of the interpolate.

\paragraph{Distributional Saint-Venant operator.} Recall that the $\curltcurl$ operator is defined on $3 \times 3$ matrix fields by taking first the $\curl$ of its lines, to obtain a new $3 \times 3$ matrix, then transposing and then taking once again the $\curl$ of its lines. If one starts with a symmetric matrix field, the result is a symmetric matrix field. We derive an expression for $\curltcurl u$ when $u \in X_h$. For this purpose we need some expressions concerning differential operators acting on distributions.

Let $T$ be a Lipschitz domain in $S$, with outward pointing unit normal $n \in \rmL^\infty(\partial T) \otimes \bbV$. Let $\delta_{\partial T}$ denote the Dirac surface measure on $\partial T$, defined by, for $\phi \in C^\infty(S)$:
\begin{equation}
\langle \delta_{\partial T}, \phi \rangle = \int_{\partial T} \phi|_{\partial T}.
\end{equation}

Let $u$ be the restriction to $T$, of a smooth scalar or vector field on $S$, which we extend by $0$ outside $T$. The restriction of $u$ to $\partial T$ (seen from the inside of $T$) is denoted $\gamma(u)$. By integration by parts in $T$, we have, as distributions on $S$:
\begin{align}
\grad u & = \grad_T u - \gamma(u) n \ \delta_{\partial T},\\
\curl u & = \curl_T u + \gamma(u) \times n \ \delta _{\partial T}, \label{eq:curludist}\\
\div u  & = \div_T u  - \gamma(u) \cdot n \ \delta _{\partial T}.
\end{align}
Here, a differential operator $\op$ on the left hand side is defined in the sense of distributions or currents on $S$, whereas on the right hand side, $\op_T$ denotes the corresponding operator defined classically  inside $T$.

Let $F$ be a (two-dimensional) domain inside a smooth oriented hyper-surface of $S$, with piecewise smooth (one-dimensional) boundary. The oriented unit normal on $F$ is denoted $n$ and the inward pointing unit normal of $\partial F$ inside the hyper-surface is denoted $m$. The Dirac surface measure on $F$ is denoted $\delta_F$, while the double layer distribution is denoted $\delta'_F$. The Dirac line measure on $\partial F$ is denoted $\delta_{\partial F}$. We have:
\begin{equation}
\grad \delta_F =n \delta'_F +  m \delta_{\partial F}.
\end{equation}
This formula can be applied as follows. Let $u$ be a smooth scalar or vector field on $S$. Recall that smooth functions  can be multiplied with distributions so that the product $u \delta_F$ is well-defined (it is a distribution with support on $F$). A Leibniz rule holds for such products. We have:
\begin{align}
\grad (u \delta_F) & = (\grad u) \delta_F + u n \delta'_F +  u m \delta_{\partial F},\\
\curl (u \delta_F) & = (\curl u) \delta_F - u \times n \delta'_F -  u\times m \delta_{\partial F}, \label{eq:curludelta}\\
\div ( u \delta_F) & = (\div u) \delta_F + u \cdot n \delta'_F +  u \cdot m \delta_{\partial F}.
\end{align}

For any $3$-vector $v \in \bbV$, let $\Skew v \in \bbA$ be the anti-symmetric $3 \times 3$ matrix defined by:
\begin{equation}
(\Skew v) v' = v \times v'.
\end{equation}

\begin{lemma}
Referring to Figure \ref{fig:sector}, consider a sector in $\bbV$ between two half-planes $F_0$ and $F_1$ originating from a common edge $E$ with unit tangent $t$. The unit outward-pointing normal on the planes is denoted $n$. The inward pointing normals to the edge in the planes are denoted $m_0$ and $m_1$. We let $n_i = \pm n$ be the normal to $F_i$ such that $(m_i,n_i,t)$ is oriented. Upon relabeling we may suppose $n_0 = -n$ and $n_1 = n$. 

The Dirac surface measure on the boundary $F = F_0 \cup F_1$ of the sector is denoted $\delta_F$ and the double-layer distribution $\delta'_F$. The Dirac line measure on the edge is denoted $\delta_E$. 

In the sector consider a constant metric $u$, extended by $0$ outside of it. Then:
\begin{align}
\curltcurl u =& (\Skew n) u (\Skew n) \delta_F' + \label{eq:sksk}\\
&   \big( (\Skew n_1) u (\Skew m_1) - (\Skew n_0) u (\Skew m_0)\big) \delta_E.
\end{align}
\end{lemma}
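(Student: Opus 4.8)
The plan is to evaluate $\curltcurl u = \curl\transp\curl u$ directly as a distribution, applying the Leibniz-type formulas (\ref{eq:curludist}) and (\ref{eq:curludelta}) row by row: the first $\curl$ turns the constant field into single layers on $F_0$ and $F_1$, the transpose rearranges the matrix factors, and the second $\curl$ differentiates these single layers into the double layer $\delta'_F$ and the edge measure $\delta_E$.

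First I would view $u$ as the constant symmetric matrix times the indicator of the sector $T$ and compute $\curl u$ row by row with (\ref{eq:curludist}). The interior terms vanish since $u$ is constant, and only the boundary single layers remain: writing $\nu_i$ for the outward unit normal of $T$ on $F_i$, crossing each row of $u$ with $\nu_i$ is the same as right multiplication by $\Skew(\nu_i)$, so $\curl u = \sum_i u\,\Skew(\nu_i)\,\delta_{F_i}$. Transposing and using $u^\transp = u$ together with $\Skew(\nu_i)^\transp = -\Skew(\nu_i)$ gives $\transp\curl u = -\sum_i \Skew(\nu_i)\,u\,\delta_{F_i}$, i.e. a single layer with constant matrix coefficient $w_i = -\Skew(\nu_i)u$ on each face.

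Next I would apply (\ref{eq:curludelta}) row by row to each $w_i\,\delta_{F_i}$. The classical curl of the constant $w_i$ vanishes, leaving a double-layer term $-w_i\Skew(\nu_i)\,\delta'_{F_i}$ and an edge term $-w_i\Skew(m_i)\,\delta_E$, where $\delta_{\partial F_i}=\delta_E$. Substituting $w_i$ and using that $\Skew(\nu_i)u\Skew(\nu_i)=\Skew(n)u\Skew(n)$ is insensitive to the sign of $\nu_i$, the double-layer contributions become $\Skew(n)u\Skew(n)$ times the outward-oriented layers $\delta'_{F_i}$, whose sum is $\delta'_F$; the edge contributions collect into $\sum_i \Skew(\nu_i)\,u\,\Skew(m_i)\,\delta_E$.

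The one genuinely delicate point, which I expect to be the main obstacle, is the orientation bookkeeping relating the outward normals $\nu_i$ to the oriented normals $n_i$ fixed by the requirement that $(m_i,n_i,t)$ be positively oriented. Because $E$ carries a single tangent $t$, the boundary orientations induced on $E$ by $F_0$ and by $F_1$ are opposite, which forces $\nu_0=-n_0$ and $\nu_1=n_1$; this is exactly the normalization recorded as $n_0=-n$, $n_1=n$. Inserting these identities, the outward-oriented double layers combine consistently into the single $\delta'_F$ in (\ref{eq:sksk}), while the edge sum $\sum_i\Skew(\nu_i)u\Skew(m_i)$ becomes $\Skew(n_1)u\Skew(m_1)-\Skew(n_0)u\Skew(m_0)$, producing precisely the relative minus sign in the second line. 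I would confirm the sign relations $\nu_0=-n_0$, $\nu_1=n_1$ once on an explicit model sector, with $E$ along $t$ and $F_0,F_1$ at two angles about it, and then invoke them to conclude.
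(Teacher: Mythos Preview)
Your proposal is correct and follows essentially the same route as the paper: apply (\ref{eq:curludist}) row by row to produce single layers on $F$, transpose, then apply (\ref{eq:curludelta}) to obtain the double layer and edge contributions. The only difference is cosmetic: the paper uses the outward normal $n$ directly throughout (writing $\curl u = u(\Skew n)\delta_F$, then $\transp\curl u = (\Skew n_0)u\,\delta_{F_0} - (\Skew n_1)u\,\delta_{F_1}$ via the stipulated relations $n_0=-n$, $n_1=n$), whereas you introduce auxiliary outward normals $\nu_i$ and derive $\nu_0=-n_0$, $\nu_1=n_1$; but since the lemma already records these sign relations, your orientation discussion could be shortened to a direct appeal to them.
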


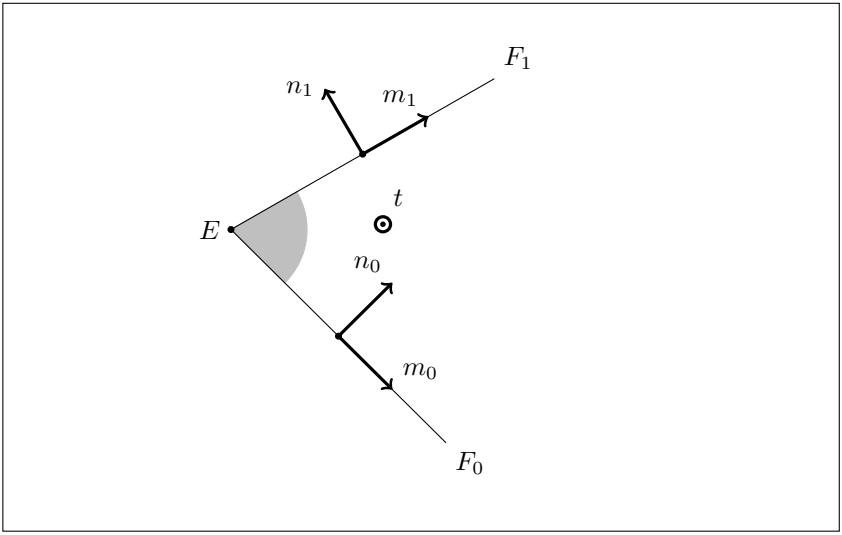
\begin{figure}[htbp]
\begin{center}
\begin{tikzpicture}

\clip[draw] (-1, 0) rectangle (10, 7);

\filldraw (2,4) [fill = lightgray, draw = lightgray] (2,4) -- ++(-45:1cm) arc(-45:30:1cm) -- cycle;

\draw (2, 4) -- +(-45:4cm) node[anchor = north west] {$F_0$};
\draw (2, 4) -- +(30:4cm) node[anchor = south west] {$F_1$};

\draw[->, very thick] (2,4) ++(-45:2cm) -- +(-45:1cm)   node[anchor = south west] {$m_0$};
\draw[->, very thick] (2,4) ++(-45:2cm) -- +(45:1cm) node[anchor = south east] {$n_0$};
\draw[very thick] (2, 4)++(-45:2cm) circle (0.025cm);

\draw[->, very thick] (2,4) ++(30:2cm) -- +(30:1cm) node[anchor = south east] {$m_1$};
\draw[->, very thick] (2,4) ++(30:2cm) -- +(120:1cm) node[anchor = east] {$n_1$};
\draw[very thick] (2, 4)++(30:2cm) circle (0.025cm);

\draw[very thick] (2, 4) circle (0.025cm) node[anchor = east]{$E$} ;

\draw[very thick] (2,4) ++(2:2cm) circle (0.1cm) circle (0.015cm) ++(0,0.1)node[anchor = south west]{$t$};

\end{tikzpicture}
\caption{View along the edge $E$ of a sector between half-planes $F_0$ and $F_1$. \label{fig:sector}}
\end{center}
\end{figure}

\begin{proof}
From formula (\ref{eq:curludist}) applied to each line of $u$ we deduce:
\begin{equation}
\curl u = u (\Skew n) \delta_F.
\end{equation}
Transposing we get:
\begin{align}
\transp \curl u & =- (\Skew n) u \delta_F,\\
&=  (\Skew n_0) u \delta_{F_0} - (\Skew n_1) u \delta_{F_1} .
\end{align}
We now apply (\ref{eq:curludelta})  to the lines of the two matrices on the right hand side and get the result.
\end{proof}

\begin{remark}
It is not clear from the above expression that $\curltcurl$ is a symmetric matrix field. However if we denote by $R_\rho$ the rotation around the vector $t$ by an angle $\rho$, there is an angle $\theta$ such that $R_\theta$ sends the basis $(m_0,n_0,t)$ to $(m_1,n_1,t)$. Put:
\begin{equation}
A(\rho) = (\Skew R_\rho n_0) u (\Skew R_\rho m_0),
\end{equation}
so that:
\begin{equation}
A(\theta)- A(0) = (\Skew n_1) u (\Skew m_1) - (\Skew n_0) u (\Skew m_0).
\end{equation}
Taking derivatives with respect to $\rho$ gives:
\begin{equation}
A'(\rho) = - (\Skew R_\rho m_0)u (\Skew R_\rho m_0) + (\Skew R_\rho n_0) u (\Skew R_\rho n_0),
\end{equation}
which is symmetric. Therefore its integral must be symmetric.
\end{remark}

\begin{proposition} \label{prop:exprctc} Consider now a simplicial complex $\calT_h$ in our domain $S$.

For any edge $e\in \calT^1_h$, let $\delta_e$ denote the Dirac line measure on $e$ and $t_e$ the unit oriented tangent vector along $e$. Let $f$ be a face having $e$ as an edge. We let $m_{ef}$ be the unit vector in the face $f$, orthogonal to the edge $e$ and pointing into the  face. We let $n_{ef}$ be the unit vector orthogonal to the face $f$ oriented such that $(m_{ef}, n_{ef}, t_e)$ is an oriented basis of $\bbV$ (the vector $n_{ef}$ depends on $e$ only for a sign). Let $\jump{u}_{ef}$ be the jump of $u$ across face $f$ in the order of $n_{ef}$.

We have:
\begin{equation}\label{eq:ctc1}
\curltcurl u = \sum_e \jump{u}_e t_e t_e^{\transp} \delta_e,
\end{equation}
where we sum over edges $e$ and put:
\begin{equation}\label{eq:ctc2}
 \jump{u}_e = \sum_f m_{ef}^{\transp}\jump{u}_{ef}n_{ef},
\end{equation}
where we sum over faces $f$ containing the edge $e$.
\end{proposition}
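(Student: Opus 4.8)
The plan is to mimic the proof of the preceding Lemma, but carried out globally on $\calT_h$ instead of on a single sector, and then to extract the announced rank-one structure. First I would write $u = \sum_T u_T$, where $u_T$ is the constant value of $u$ on the tetrahedron $T$, extended by zero outside $T$, and use linearity of $\curltcurl$ in the distributional sense. Applying (\ref{eq:curludist}) line by line to each constant $u_T$ gives $\curl u_T = u_T(\Skew n)\delta_{\partial T}$; summing over tetrahedra and combining the two contributions on each interior face $f$ (whose two outward normals are opposite) yields
\begin{equation*}
\curl u = \sum_f \jump{u}_f\,(\Skew n_f)\,\delta_f,
\end{equation*}
where $n_f$ is a fixed unit normal on $f$ and $\jump{u}_f$ is the jump across $f$ in the order of $n_f$. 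Since the jump of a symmetric field is symmetric, transposing gives $\transp\curl u = -\sum_f (\Skew n_f)\jump{u}_f\,\delta_f$.

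Next I would apply (\ref{eq:curludelta}) line by line to each term $(\Skew n_f)\jump{u}_f\,\delta_f$. As the matrix $(\Skew n_f)\jump{u}_f$ is constant, the classical-curl term drops out and only the double-layer ($\delta'_f$) and boundary ($\delta_{\partial f}$) terms survive; recalling that crossing the lines of a matrix $M$ with a vector $v$ is right multiplication by $\Skew v$, this produces a $\delta'_f$ contribution proportional to $(\Skew n_f)\jump{u}_f(\Skew n_f)$ and a $\delta_{\partial f}$ contribution proportional to $(\Skew n_f)\jump{u}_f(\Skew m_{ef})$, the inward normal to $\partial f$ in the plane of $f$ being exactly $m_{ef}$ on the edge $e\subset\partial f$. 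The double-layer terms vanish: as a bilinear form $(\Skew n_f)\jump{u}_f(\Skew n_f)$ depends only on the tangential-tangential part of $\jump{u}_f$ relative to $f$, which is zero by the defining continuity of Regge metrics. Collecting the surviving boundary terms by edges then gives $\curltcurl u = \sum_e \big(\sum_{f\ni e}(\Skew n_{ef})\jump{u}_{ef}(\Skew m_{ef})\big)\delta_e$, once the sign bookkeeping matching the fixed normal $n_f$ to the oriented $n_{ef}$ (and $\jump{u}_f$ to $\jump{u}_{ef}$) is settled.

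It remains to evaluate the per-edge coefficient matrix. Working in the oriented orthonormal frame $(m_{ef}, n_{ef}, t_e)$, a direct computation expresses $(\Skew n_{ef})\jump{u}_{ef}(\Skew m_{ef})$ through the four numbers $m_{ef}^{\transp}\jump{u}_{ef}n_{ef}$, $t_e^{\transp}\jump{u}_{ef}n_{ef}$, $m_{ef}^{\transp}\jump{u}_{ef}t_e$ and $t_e^{\transp}\jump{u}_{ef}t_e$. The last two are tangential-tangential components relative to $f$ (both $m_{ef}$ and $t_e$ lie in the plane of $f$), hence vanish by tangential-tangential continuity. This collapses the per-face matrix to $(m_{ef}^{\transp}\jump{u}_{ef}n_{ef})\,t_e t_e^{\transp} - (t_e^{\transp}\jump{u}_{ef}n_{ef})\,m_{ef}t_e^{\transp}$. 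Summing over $f\ni e$, the first group assembles precisely into $\jump{u}_e\, t_e t_e^{\transp}$ with $\jump{u}_e$ as in (\ref{eq:ctc2}), while the second is $-w_e t_e^{\transp}$ with $w_e = \sum_{f\ni e}(t_e^{\transp}\jump{u}_{ef}n_{ef})\,m_{ef}$.

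The main obstacle is to see that the leftover $w_e t_e^{\transp}$ vanishes, since the components $t_e^{\transp}\jump{u}_{ef}n_{ef}$ are tangential-normal and need not cancel face by face. Here I would invoke symmetry instead of a direct geometric cancellation: because $u$ is symmetric, $\curltcurl u$ is a symmetric matrix field, so the coefficient of each $\delta_e$ is symmetric. Since $\jump{u}_e t_e t_e^{\transp}$ is already symmetric, $w_e t_e^{\transp}$ is symmetric too; but $w_e$ is a linear combination of the vectors $m_{ef}$, all orthogonal to $t_e$, so $w_e\perp t_e$, and a rank-one matrix $w_e t_e^{\transp}$ with $w_e\perp t_e$ is symmetric only if $w_e=0$. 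This establishes (\ref{eq:ctc1})–(\ref{eq:ctc2}). The two points demanding real care are the orientation and sign bookkeeping when passing from the fixed face normals to the oriented frames $(m_{ef},n_{ef},t_e)$, and this concluding symmetry argument.
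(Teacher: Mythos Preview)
Your proposal is correct and follows essentially the same route as the paper: apply the sector lemma globally, cancel the double-layer terms by tangential-tangential continuity, obtain the per-edge matrix $\sum_{f\ni e}(\Skew n_{ef})\jump{u}_{ef}(\Skew m_{ef})$, and then use symmetry of $\curltcurl u$ to reduce it to a multiple of $t_e t_e^{\transp}$. The only cosmetic difference is in how the scalar is extracted: the paper argues invariantly that each $(\Skew n_{ef})\jump{u}_{ef}$ has rows proportional to $n_{ef}$ (hence each face term has rows proportional to $t_e$), invokes symmetry, and then reads off the coefficient by taking a trace, whereas you compute the per-face matrix explicitly in the frame $(m_{ef},n_{ef},t_e)$ and identify the coefficient $m_{ef}^{\transp}\jump{u}_{ef}n_{ef}$ directly before using symmetry to kill the residual $w_e t_e^{\transp}$.
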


\begin{proof}
We use the preceding lemma. The double layer distributions cancel two by two, by the tangential-tangential continuity of $u$. We have:
\begin{equation}
\curltcurl u = \sum_e \sum_f (\Skew n_{ef}) \jump{u}_{ef} (\Skew m_{ef}) \delta_e.
\end{equation}
Remark that the lines of the matrix $(\Skew n_{ef}) \jump{u}_{ef}$ are proportional to $n_{ef}$ (by tangential-tangential continuity of $u$). In other words there is a vector $\alpha_{ef} $ such that:
\begin{equation}
(\Skew n_{ef}) \jump{u}_{ef}  = \alpha_{ef} n_{ef}^{\transp}.
\end{equation}
Then write:
\begin{equation}
(\Skew n_{ef}) \jump{u}_{ef} (\Skew m_{ef}) =\alpha_{ef}n_{ef}^{\transp} (\Skew m_{ef}) =  -\alpha_{ef}t_e^{\transp}.
\end{equation}  
This matrix has lines proportional to $t_e$. It follows that the lines of the matrix $\sum_f (\Skew n_{ef}) \jump{u}_{ef} (\Skew m_{ef})$ must be proportional to $t_e$. Since in addition this matrix is symmetric, it can be written:
\begin{equation}
\sum_f (\Skew n_{ef}) \jump{u}_{ef} (\Skew m_{ef}) = s_e t_e t_e^{\transp}.
\end{equation}
The scalar coefficient $s_e$ is determined by taking traces:
\begin{align}
s_e & = \tr \sum_f (\Skew n_{ef}) \jump{u}_{ef} (\Skew m_{ef}),\\
&= -\tr \sum_f  (\Skew n_{ef}) \jump{u}_{ef}  (t_e n_{ef}^{\transp} -  n_{ef} t_e^{\transp}),\\
& = -\sum_f  n_{ef}^{\transp} (\Skew n_{ef}) \jump{u}_{ef}  t_e -\sum_f   t_e^{\transp} (\Skew n_{ef}) \jump{u}_{ef}  n_{ef}   ,\\
&= \sum_f m_{ef}^{\transp}\jump{u}_{ef}n_{ef},
\end{align}
as announced.
\end{proof}
One can compare with the approach of \cite{HauKuhOrt07}, where tangential-tangential continuity is not enforced on the symmetric matrix fields. The discrete curvature they define is based on inter-element jumps, in an expression corresponding to the double layer distribution in (\ref{eq:sksk}).

\paragraph{A discrete elasticity complex.} The  space $X^h$ can be inserted in a complex of spaces $X^k_h$ with $0\leq k\leq 3$, each equipped with a densely defined interpolator $I^k_h$.

\describe{0} We let $X^0_h$ denote the space of continuous piecewise affine vector fields. It consists of the vector fields of the form:
\begin{equation}
v = \sum_{x \in \calT^0_h}  v_x \lambda_x,
\end{equation}
for all choices of vectors $v_x \in \bbV$ assigned to vertexes $x \in \calT^0_h$. We equip $X^0_h$ with the nodal interpolator $I^0_h$, which to any $v \in C^\infty(S) \otimes \bbV$ associates:
\begin{equation}
I^0_h v = \sum_{x \in \calT^0_h} v(x) \lambda_x.
\end{equation}

\describe{1} We put $X^1_h = X_h$, and equip it with the interpolator $I^1_h = I_h$.  For any element of $X^0_h$, its symmetrized gradient is piecewise constant and tangential-tangential continuous. In other words, the deformation operator, denoted $\defo$, induces a map $X^0_h \to X^1_h$. For our choice of bases of these spaces we notice that if $e$ is an edge with vertexes $x$ and $y$, we have for any $v \in \bbV$: 
\begin{equation}
\mu_e (\defo v \lambda_x) = (y-x)^{\transp} v.
\end{equation}

\describe{2} We let $X^2_h$ denote the space of matrix valued edge measures of the form:
\begin{equation}
u = \sum_{e \in \calT^1_h} u_e t_e t_e^{\transp} \delta_e,
\end{equation}
where  for each edge $e$, $u_e$ is a real number, $t_e$ is the unit oriented tangent vector to $e$ and $\delta_e$ is the Dirac line measure on $e$. Proposition \ref{prop:exprctc} shows that $\curltcurl$  induces a map $X^1_h \to X^2_h$. The standard $\rmL^2$ duality on matrix fields extends to a non-degenerate bilinear form on $X^2_h \times X^1_h$, which we denote by $\langle \cdot, \cdot \rangle$. Notice that, comparing with the definition of degrees of freedom $\mu_e$ on $X^1_h$ in equation (\ref{eq:mue}), we have for any $u \in X^1_h$:
\begin{align}
\langle t_e t_e^{\transp} \delta_e, u \rangle & = \int_e t_e^{\transp} u t_e, \\
& = \mu_e (u) / l_e, \label{eq:mudist}
\end{align}
where $l_e$ is the Euclidean length of the edge $e$. Therefore the interpolator $I^1_h$ deduced from the degrees of freedom $\mu_e$ satisfies for any  $u \in C^\infty(S) \otimes \bbS$:
\begin{equation}\label{eq:defi1}
\forall v \in X^2_h\quad \langle v, I^1_hu \rangle = \langle v, u \rangle.
\end{equation}

Just as the elements of $X^2_h$ act as degrees of freedom for $X^1_h$, the elements of $X^1_h$ can be used as degrees of freedom for $X^2_h$. The associated interpolator $I^2_h$ onto $X^2_h$ is determined by the property that for any $u\in \rmL^2(S) \otimes \bbS$, $I^2_hu$ satisfies:
\begin{equation}\label{eq:defi2}
\forall v \in X^1_h \quad \langle I^2_h u , v \rangle = \langle u , v \rangle.
\end{equation}
We remark that, by (\ref{eq:defi1}) and (\ref{eq:defi2}), for all $u \in \rmL^2(S)\otimes \bbS$ and all $v \in C^\infty(S) \otimes \bbS$:  
\begin{equation}
\langle I^2_h u, v \rangle = \langle I^2_h u, I^1_h v \rangle = \langle u, I^1_h v \rangle.
\end{equation}
In other words, $I^1_h$ and $I^2_h$ are adjoints of one another.

\describe{3} We let $X^3_h$ denote the space of vector vertex measures of the form:
\begin{equation}
u = \sum_{x \in \calT^0_h} u_x \delta_x,
\end{equation}
where for each vertex $x$, $ u_x\in \bbV$ is a vector and $\delta_x$ is the Dirac measure attached to $x$. The elements of $X^3_h$ act as natural degrees of freedom for $X^0_h$, yielding the nodal interpolator $I^0_h$ of $X^0_h$. The standard $\rmL^2$ duality on vector fields extends to a non-degenerate bilinear form on $X^3_h \times X^0_h$, denoted $\langle \cdot, \cdot \rangle$.
We define an interpolator $I^3_h$ onto $X^3_h$ by requiring, for any $u \in \rmL^2(S) \otimes \bbV$:
\begin{equation}
\forall v \in X^0_h \quad \langle I^3_h u , v \rangle = \langle u , v \rangle.
\end{equation}
As in the preceding case we remark that for all $u \in \rmL^2(S)\otimes \bbV$ and all smooth $v\in C^\infty (S) \otimes \bbV$:
\begin{equation}
\langle I^3_h u, v \rangle = \langle I^3_h u, I^0_h v \rangle = \langle u, I^0_h v \rangle.
\end{equation}
One also checks that for an edge $e$ with unit tangent $t_e$ and vertexes $x$ and $y$, such that $t_e = (y-x)/|y-x|$, we have:
\begin{equation}
\div (t_e t_e^{\transp} \delta_e) = t_e(\delta_y - \delta_x).
\end{equation}
This shows that the divergence operator induces a map $ X^2_h \to X^3_h$.

\vspace{1ex}

That concludes the list of spaces and operators we need to form our diagram. The following theorem summarizes some of the above remarks, and relates the continuous elasticity complex \cite{ArnFalWin06IMA2} to a discrete one. 

\begin{theorem}\label{theo:comm}
We have a commuting diagram of spaces:
\begin{equation}
\xymatrix{
C^{\infty}(S) \otimes \bbV \ar[r]^{\defo}\ar[d]^{I^0_h} & C^{\infty}(S) \otimes\bbS \ar[rr]^{ \curltcurl}\ar[d]^{I^1_h}& & C^{\infty}(S) \otimes \bbS \ar[r]^\div \ar[d]^{I^2_h} & C^{\infty}(S) \otimes\bbV \ar[d]^{I^3_h}\\
X^0_h \ar[r]^{\defo} & X^1_h \ar[rr]^{\curltcurl}& & X^2_h \ar[r]^{\div} & X^3_h
}
\end{equation}
On the lower row the linear operators are defined in the sense of distributions.
\end{theorem}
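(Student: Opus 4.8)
The plan is to verify separately that the lower row is well defined as a sequence of maps between the stated spaces, and that each of the three squares commutes on the smooth fields of the upper row; the latter is the real content. That the lower-row operators land in the announced spaces has already been recorded: $\defo$ sends $X^0_h$ into $X^1_h$, Proposition \ref{prop:exprctc} shows $\curltcurl$ sends $X^1_h$ into $X^2_h$, and the formula $\div(t_e t_e^\transp\delta_e)=t_e(\delta_{y}-\delta_{x})$ shows $\div$ sends $X^2_h$ into $X^3_h$. Throughout I would use the defining duality of the interpolators, equations \eqref{eq:defi1} and \eqref{eq:defi2}, together with the non-degeneracy of the pairings $X^2_h\times X^1_h$ and $X^3_h\times X^0_h$: two elements of $X^2_h$ (resp. $X^3_h$) coincide as soon as they pair equally with every element of $X^1_h$ (resp. $X^0_h$).

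For the left square I would argue through the degrees of freedom $\mu_e$. Given a smooth vector field $w$, the fundamental theorem of calculus along $e$ applied to \eqref{eq:mue} gives $\mu_e(\defo w)=(y_e-x_e)^\transp\big(w(y_e)-w(x_e)\big)$, since the integrand equals $\tfrac{d}{ds}\big[(y_e-x_e)\cdot w\big]$ along the edge. Thus $\mu_e(\defo w)$ depends on $w$ only through its vertex values. Because $I^0_h w$ takes the same values as $w$ at every vertex, we get $\mu_e(\defo I^0_h w)=\mu_e(\defo w)$ for all edges $e$; as $\defo I^0_h w\in X^1_h$, the characterization of $I^1_h$ by the $\mu_e$ forces $\defo I^0_h w=I^1_h\defo w$.

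The right square is the formal adjoint of the left one, and I would prove it by testing against $X^0_h$. For smooth symmetric $u$ and $v\in X^0_h$, integration by parts on the closed manifold $S$ gives $\langle\div u,v\rangle=-\langle u,\defo v\rangle$, while the same computation for edge measures, read off from $\div(t_e t_e^\transp\delta_e)=t_e(\delta_{y}-\delta_{x})$, yields the discrete identity $\langle\div w,v\rangle=-\langle w,\defo v\rangle$ for every $w\in X^2_h$. Combining these with $\defo v\in X^1_h$ and \eqref{eq:defi2}, which gives $\langle u,\defo v\rangle=\langle I^2_h u,\defo v\rangle$, and with the definition of $I^3_h$, which gives $\langle\div u,v\rangle=\langle I^3_h\div u,v\rangle$, one finds $\langle I^3_h\div u,v\rangle=\langle\div I^2_h u,v\rangle$ for every $v\in X^0_h$, whence equality by non-degeneracy.

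The middle square is the crux. Testing the two members of $X^2_h$ against an arbitrary $z\in X^1_h$ and using \eqref{eq:defi2}, the square amounts to $\langle\curltcurl I^1_h u,z\rangle=\langle\curltcurl u,z\rangle$ for all smooth symmetric $u$. Moving $\curltcurl$ onto $z$ by its (distributional) self-adjointness — legitimate here because one pairs the measure $\curltcurl z\in X^2_h$ with the smooth field $u$ — and then replacing $u$ by $I^1_h u$ via \eqref{eq:defi1} (applicable since $\curltcurl z\in X^2_h$), I reduce the claim to the self-adjointness of the discrete operator itself, $\langle\curltcurl a,b\rangle=\langle\curltcurl b,a\rangle$ for all $a,b\in X^1_h$; as $u$ ranges over smooth fields $I^1_h u$ exhausts $X^1_h$, so this is in fact equivalent to the square. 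This discrete self-adjointness is the main obstacle: the naive proof, integrating one $\curl$ by parts, fails because $\curl a$ and $\curl b$ are singular layers on the same faces and their product is undefined. I would instead establish it by regularization, mollifying $a,b$ to smooth $a_\epsilon=a*\phi_\epsilon$, $b_\epsilon=b*\phi_\epsilon$ on the torus, applying smooth self-adjointness $\langle\curltcurl a_\epsilon,b_\epsilon\rangle=\langle a_\epsilon,\curltcurl b_\epsilon\rangle$, and letting $\epsilon\to 0$. Since $\curltcurl a_\epsilon=(\curltcurl a)*\phi_\epsilon$ concentrates on the edges with matrix weight $t_e t_e^\transp$, and since tangential-tangential continuity makes $t_e^\transp b\, t_e$ single-valued along each edge, the pairing converges to $\langle\curltcurl a,b\rangle$ (and symmetrically on the right), yielding the desired symmetry.
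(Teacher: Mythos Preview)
Your argument is correct and mirrors the paper's: the left square via the fundamental theorem of calculus along edges, the right square by duality with the left, and the middle square by the same chain $\langle I^2_h\curltcurl u,v\rangle=\langle\curltcurl u,v\rangle=\langle\curltcurl v,u\rangle=\langle\curltcurl v,I^1_h u\rangle=\langle\curltcurl I^1_h u,v\rangle$. The paper records that last equality without comment, whereas you rightly isolate it as the discrete self-adjointness $\langle\curltcurl a,b\rangle=\langle\curltcurl b,a\rangle$ for $a,b\in X^1_h$ and supply the mollification argument (which goes through because $t_e^{\transp} b\, t_e$ is single-valued on the star of each edge by tangential--tangential continuity); this is a welcome clarification rather than a different route.
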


\begin{proof} Only commutativity remains to be proved.

(i) For any $u \in C^\infty(S) \otimes \bbV$ and any edge $e \in \calT^1_h$ with vertexes $x, y$, we have:
\begin{equation}
\mu_e( \defo u) =   t_e^{\transp} (u(y) - u(x)). 
\end{equation}
We deduce :
\begin{equation}
\mu_e( \defo u) = \mu_e (\defo I^0_h u).
\end{equation}
Commutation of the first square follows.

(ii) For any $u \in C^{\infty}(S) \otimes\bbS$ and any $v \in X^1_h$, we have:
\begin{align}
\langle I^2_h \curltcurl u , v \rangle & =\langle \curltcurl  u, v \rangle,\\
& =\langle \curltcurl  v, u \rangle, \\
&= \langle \curltcurl v, I^1_h u \rangle, \\
&= \langle \curltcurl I^1_h u,  v \rangle,
\end{align}
so that:
\begin{equation}
I^2_h \curltcurl u = \curltcurl I^1_h u.
\end{equation}
This proves commutation of the middle square. 

(iii) Commutation of the last square follows from (i) by duality.
\end{proof}

The discrete fields considered here have less regularity than those defined in \cite{ArnAwaWin08}, which are all at least square integrable, throughout the complex. Spaces of matrix fields adapted to a second order differential operator have also been considered in \cite{SchSin07}. In their case, the differential operator is ``$\div \div$'' (extracting first the divergence, per line say, of the matrix field and then the divergence of the obtained vector field), for which normal-normal continuity of the matrix fields is the natural analogue of our tangential-tangential continuity. Note also that the two-dimensional case of the ``Regge complex'' was studied in \cite{CiaCia09} (independently of any reference to Regge calculus).

\section{Linearizing the Regge action \label{sec:lin}}

The setting and notations are as in the preceding section. Our aim is to compute the second variation of the Regge action and relate it to the previously exhibited Saint-Venant operator. 

We consider first what happens around a single edge. Fix an oriented line (edge) in $\bbV$ with unit tangent $t$. The set-up is similar to the one of Figure \ref{fig:sector}. Originating from this edge are half-planes (faces) indexed by a cyclic parameter $f$ and ordered counter-clockwise. Thus the face coming immediately after $f$ is denoted $f+1$. The sector between faces $f$ and $f+1$ is indexed by $f+1/2$. Let $m_f$ be the oriented unit length vector in the half-plane  $f$ which is orthogonal to $t$. Let $n_f$ be the normal to half-plane $f$, so that $(m_f,n_f,t)$ is an oriented orthonormal basis of $\bbV$. 

Each sector between half-planes $f$ and $f+1$ is equipped with a constant metric $u_{f+1/2} = u_{f+1/2}(\epsilon)$ depending on a small parameter $\epsilon$. We suppose that $u(\epsilon)$ is continuous across the half-plane $f$ in the tangential-tangential directions. We suppose that $u_{f + 1/2}(0)$ is the canonical Euclidean metric on $\bbV$, and we will be particularly interested in the first derivative of $u_{f + 1/2}(\epsilon)$ with respect to $\epsilon$ at $\epsilon = 0$, which we denote by $u_{f + 1/2}'$.

To ease notations, the dependence upon $\epsilon$ will be implicit in what follows. The derivative of a function $\psi :\epsilon \mapsto \psi(\epsilon)$ at $\epsilon$ is denoted $\psi'(\epsilon)$. Unless otherwise specified we only differentiate at $\epsilon = 0$ and therefore write $\psi' = \psi'(0)$.

We want to parallel transport a vector around the edge, by a path going once around it and with respect to the metric $u(\epsilon)$. In each sector parallel transport is trivial, but from one sector to another, say from $f- 1/2$ to $f+1/2$ we denote by $T_f$ the matrix of the parallel transport in the basis $(m_f, n_f,t)$. It is defined as follows. The oriented unit normal to face $f$ with respect to $u_{f-1/2}$ is denoted $k_f^-$, that with respect to $u_{f+1/2}$ is denoted $k_f^+$. The operator $T_f$ maps the basis $(m_f,k_f^-,t)$ to $(m_f,k_f^+,t)$ . We denote by $R_{f}^{g}$ the matrix of the identity operator from basis $(m_f,n_f,t)$ to basis $(m_{g},n_{g},t)$.

The holonomy from sector $f-1/2$ to itself, in the basis $(m_f, n_f,t)$ is defined to be:
\begin{equation}\label{eq:hol}
E_{f-1/2} = R_{f-1}^{f}T_{f-1} \cdots R_{g}^{g+1}T_{g} \cdots R_f^{f+1}T_f.
\end{equation}
Remark that $T_f$ is an isometry from the metric $u_{f-1/2}$ to the metric $u_{f+1/2}$. Consequently $E_{f-1/2}$ is an isometry with respect to $u_{f-1/2}$. In the plane orthogonal to $t$ it is a simple rotation.
The angle of this rotation does not depend on $f$ and is the \emph{deficit angle} associated with the edge $t$. We denote it by  $\theta$.

\begin{proposition}
The derivative of the deficit angle $\theta$ at $\epsilon = 0$ is given as a sum of jumps:
\begin{equation}\label{eq:sumjumps}
\theta'= 1/2 \sum_f m_f^{\transp} (u'_{f+1/2} -u'_{f-1/2}) n_f.
\end{equation}
\end{proposition}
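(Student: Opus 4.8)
The plan is to exploit the fact that at $\epsilon = 0$ the whole configuration is Euclidean, so that the holonomy $E_{f-1/2}$ is the identity and its deficit angle vanishes; then to differentiate and extract $\theta'$ from $E'_{f-1/2}$ by a single scalar contraction. \textbf{Step 1 (the base point).} At $\epsilon=0$ every sector carries the Euclidean metric, so the two normals to a face coincide, $k_g^- = k_g^+ = n_g$, and hence each transition satisfies $T_g(0)=\id$. The frame changes $R_g^{g+1}$ do \emph{not} depend on $\epsilon$ (they are pure geometry): each is the rotation about $t$ by the dihedral angle of the sector $g+1/2$, and since these angles sum to $2\pi$ around the edge, the ordered product of all the $R_g^{g+1}$ equals the rotation by $2\pi$, i.e.\ $\id$. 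Therefore $E_{f-1/2}(0)=\id$ and $\theta(0)=0$.

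\textbf{Step 2 (relate $\theta'$ to $E'_{f-1/2}$).} Since $E_{f-1/2}(\epsilon)$ is a $u_{f-1/2}(\epsilon)$-isometry, differentiating $E^{\transp} u_{f-1/2} E = u_{f-1/2}$ at $\epsilon=0$ (where $E=\id$ and $u_{f-1/2}(0)=\id$) gives $E'^{\transp}+E'=0$, so $E'_{f-1/2}$ is antisymmetric. As the holonomy fixes the edge direction, $E_{f-1/2}t=t$, whence $E'_{f-1/2}\,t=0$, so the axial vector of $E'_{f-1/2}$ is parallel to $t$. Because $\theta(0)=0$, the Euclidean and $u_{f-1/2}$-rotations by the small angle $\theta$ about $t$ agree to first order, so $E_{f-1/2}=\exp(\theta\,\Skew t)+O(\epsilon^2)$ and thus $E'_{f-1/2}=\theta'\,\Skew t$. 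Using $\tr\big((\Skew t)^2\big)=-2$, this yields the extraction formula $\theta' = -\tfrac12\,\tr\big(\Skew t\,\,E'_{f-1/2}\big)$.

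\textbf{Step 3 (differentiate the product and localize).} In the product (\ref{eq:hol}) only the $T_g$ depend on $\epsilon$, so differentiating and setting the remaining $T_g=\id$ gives a sum of terms $\big(R_{f-1}^f\cdots R_g^{g+1}\big)\,T'_g\,\big(R_{g-1}^g\cdots R_f^{f+1}\big)$. Writing $S_g=R_{g-1}^g\cdots R_f^{f+1}$ and using that the full $R$-product is $\id$, the left factor is $S_g^{-1}$, so $E'_{f-1/2}=\sum_g S_g^{-1}T'_g S_g$, a sum of conjugates by rotations $S_g$ about $t$. Since each $S_g$ commutes with $\Skew t$, cyclicity of the trace collapses the conjugation and gives $\theta' = -\tfrac12\sum_g \tr\big(\Skew t\,\,T'_g\big)$. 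It remains to compute $T'_g$: the map $T_g$ fixes $m_g$ and $t$ and sends $k_g^-\mapsto k_g^+$, so $T'_g=\big((k_g^+)'-(k_g^-)'\big)\,n_g^{\transp}$. The first variation of the unit normal to the face $g$ (spanned by $t$ and $m_g$) under a perturbation $\id+\epsilon u'$ has $m_g$-component $-\,m_g^{\transp}u'n_g$ (from $u$-orthogonality of the normal to $m_g$). A short computation with $\tr(\Skew t\,\Delta\,n_g^{\transp})=m_g^{\transp}\Delta$ then gives $\tr(\Skew t\,T'_g)=-\,m_g^{\transp}(u'_{g+1/2}-u'_{g-1/2})n_g$, and substituting yields exactly (\ref{eq:sumjumps}).

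The main obstacle is \textbf{Step 3}, specifically the first variation $T'_g$ of the transition isometry, which amounts to the first variation of a face's unit normal as the metric varies — an orthogonalization-and-normalization perturbation — together with keeping all orientations of the frames $(m_g,n_g,t)$ and the resulting signs consistent. The algebra with $\Skew$, cross products and the trace pairing is routine but error-prone; the conceptual points (that the $R$'s are $\epsilon$-independent, that the loop closes to $\id$, and that $E'_{f-1/2}=\theta'\Skew t$) are what make the scalar extraction clean.
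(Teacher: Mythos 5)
Your proof is correct and follows essentially the same route as the paper: differentiate the holonomy product, extract $\theta'$ via the trace pairing $\theta'=-\tfrac12\tr(J E'_{f-1/2})$ with the skew matrix about $t$ (your $\Skew t$ is the paper's $J$), reduce to $\sum_g\tr(J T'_g)$ by commutation, and compute the relevant component of $(k_g^\pm)'$ from the orthogonality relation $m_g^{\transp}u_{g\pm1/2}k_g^\pm=0$. The only cosmetic differences are that you obtain $E'_{f-1/2}=\theta'\,\Skew t$ from antisymmetry of the derivative of an isometry rather than by conjugating the explicit rotation matrix, and you write $T'_g$ as a rank-one matrix instead of the paper's entries $\alpha_g^\pm,\beta_g^\pm,\gamma_g^\pm$.
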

\begin{proof}
Let $\tilde m_f$ be the oriented unit normal to $t$ in face $f$, with respect to $u_{f-1/2}$ or equivalently $u_{f+1/2}$. Let $P_f$ be the matrix of the identity from basis $(m_f,n_f,t)$ to $(\tilde m_f, k_f^-,t)$. Since $(\tilde m_f, k_f^-)$ is an orthonormal oriented basis of the plane orthogonal to $t$, with respect to the metric induced by $u_{f-1/2}$ we have:
\begin{equation}
E_{f-1/2} = P_f^{-1} \left (
\begin{array}{rrr}
\cos \theta & -\sin \theta & 0\\
\sin \theta &  \cos \theta & 0\\
0 & 0& 1
\end{array}
\right)
P_f.
\end{equation}
Differentiating this expression at $\epsilon = 0$, using that $P_f(0)$ is the identity matrix,  we get:
\begin{equation}
E_{f-1/2}' = \left(
\begin{array}{rrr}
0^{\phantom'} & -\theta'&0\\
\theta' & 0^{\phantom'}&0\\
0^{\phantom'} &0^{\phantom'} &0
\end{array}
\right).
\end{equation}
Differentiating (\ref{eq:hol}) at $\epsilon=0$ we obtain, since $T_f(0)$ is the identity matrix: 
\begin{equation}
E_{f-1/2}'= R_{f-1}^{f} T'_{f-1} R_f^{f-1}+  \cdots+ R_{g}^{f} T'_{g} R_f^{g}  +\cdots + T'_f.
\end{equation}
Let $J$ be the canonical skew matrix:
\begin{equation}
J= \left(\begin{array}{rrr}
0 & -1 & 0\\
1 & 0 & 0\\
0 & 0 & 0
\end{array}\right).
\end{equation} 
It commutes with all $R_f^{g}$. We have:
\begin{align}
\theta'&= -1/2 \tr(JE_{f-1/2}^{\prime})\\
&= -1/2 \sum_g \tr(J T'_g).\label{eq:thetasumtr}
\end{align}
We determine the terms in this sum. Let $M_g^\pm$ be the matrix in $(m_g,n_g,t)$ of the operator sending $(m_g,n_g, t)$ to $(m_g, k_g^\pm, t)$. We have:
\begin{equation}
T_g= M_g^+ (M_g^{-})^{-1}.
\end{equation}
Differentiating with respect to $\epsilon$ at $\epsilon=0$ we obtain, since $M_g^\pm$ is the identity matrix at $\epsilon = 0$:
\begin{equation}
T_g'= (M_g^+)' - (M_g^{-})'.
\end{equation}
Define reals $\alpha_g^\pm, \beta_g^\pm, \gamma_g^\pm$ by:
\begin{equation}
M_g^\pm = \left (
\begin{array}{rrr}
1 & \alpha_g^\pm & 0\\
0 & \beta_g^\pm & 0\\
0 & \gamma_g^\pm & 1
\end{array}
\right). \label{eq:defmgpm}
\end{equation}
Then: 
\begin{equation}\label{eq:tralpha}
\tr(J T'_g) = (\alpha_g^+)' - (\alpha_g^-)'.
\end{equation}
Differentiating the identity :
\begin{equation}
m_g^{\transp} u_{g + 1/2} k_g^+ = 0,
\end{equation}
at $\epsilon = 0$ yields:
\begin{equation}
m_g^{\transp}u'_{g + 1/2}n_g + m_g^{\transp}(k_g^+)' = 0.
\end{equation}
Since also, by the definition (\ref{eq:defmgpm}):
\begin{equation}
\alpha_g^+ = m_g^{\transp} k_g^+,
\end{equation}
it follows that:
\begin{equation}
(\alpha_g^+)' = - m_g^{\transp}u'_{g+1/2}n_g.
\end{equation}
Similarly we have:
\begin{equation}
(\alpha_g^-)' = - m_g^{\transp}u'_{g-1/2}n_g.
\end{equation}
Insert these two expressions in (\ref{eq:tralpha})  and combine with (\ref{eq:thetasumtr}) to get the proposition.
\end{proof}

We consider now the general case of a simplicial complex in $S$. For a Regge metric $u \in X_h$ its Regge action $\calR(u)$ is defined as follows. For any edge $e$ its length, as determined by $ u$, is denoted $l_e(u)$ and its deficit angle $\theta_e(u)$. Then, summing over edges, one defines \cite{Reg61}: 
\begin{equation}
\calR(u) = \sum_e \theta_e(u) l_e(u).
\end{equation}

\begin{proposition} Let Regge metrics $u(\epsilon) \in X_h$ depend smoothly on a small real parameter $\epsilon$. 
We use the notations:
\begin{align}
\calR(\epsilon) &= \calR(u(\epsilon)),\
\theta_e(\epsilon) = \theta_e(u(\epsilon)),\
l_e(\epsilon)  = l_e(u(\epsilon)).
\end{align} 

We suppose that $u(0)$ is the constant Euclidean metric. 
Then we have:
\begin{equation}\label{eq:linregge}
\calR(\epsilon)= \epsilon^2/4 \sum_e \theta_e'(0) l_e(0)^{-1} \mu_e(u'(0)) + \calO(\epsilon^3).
\end{equation}
\end{proposition}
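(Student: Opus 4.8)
The plan is to Taylor expand $\calR(\epsilon) = \calR(0) + \epsilon\,\calR'(0) + (\epsilon^2/2)\,\calR''(0) + \calO(\epsilon^3)$ and identify the three coefficients. The zeroth order term vanishes because $u(0)$ is the flat Euclidean metric, for which every deficit angle is zero, so $\calR(0) = \sum_e \theta_e(0) l_e(0) = 0$. The engine driving the whole computation is the \emph{Schl\"afli differential identity} $\sum_e l_e(\epsilon)\,\theta_e'(\epsilon) = 0$, valid for each $\epsilon$ (with $u(\epsilon)$ nondegenerate, which holds for $|\epsilon|$ small since $u(0)$ is Euclidean). I would obtain it from the classical per-simplex Schl\"afli identity, which in differential form reads $\sum_{e \subset T} l_e(\epsilon)\,\theta_{e,T}'(\epsilon) = 0$ for each tetrahedron $T$ (here $\theta_{e,T}$ is the interior dihedral angle of $T$ at its edge $e$), summed over tetrahedra, together with $\theta_e = 2\pi - \sum_{T \ni e}\theta_{e,T}$ valid on the closed manifold $S$, whence $\sum_e l_e\,\theta_e' = -\sum_T\sum_{e\subset T} l_e\,\theta_{e,T}' = 0$.

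Granting this identity, the first and second order terms fall out quickly. Differentiating $\calR(\epsilon) = \sum_e \theta_e(\epsilon) l_e(\epsilon)$ once and discarding $\sum_e \theta_e'(\epsilon) l_e(\epsilon)$ by Schl\"afli leaves $\calR'(\epsilon) = \sum_e \theta_e(\epsilon) l_e'(\epsilon)$; in particular $\calR'(0) = 0$ because $\theta_e(0) = 0$, so there is no linear term, consistent with the flat metric being a critical point. Differentiating once more gives $\calR''(\epsilon) = \sum_e \theta_e'(\epsilon) l_e'(\epsilon) + \sum_e \theta_e(\epsilon) l_e''(\epsilon)$, and evaluating at $\epsilon = 0$ kills the second sum, leaving the clean expression $\calR''(0) = \sum_e \theta_e'(0)\, l_e'(0)$.

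It remains to convert $l_e'(0)$ into $\mu_e(u'(0))$. Here I would use that for a Regge metric $u \in X_h$ the tangential-tangential continuity makes the integrand $(y_e - x_e)^{\transp} u\,(y_e - x_e)$ defining $\mu_e$ constant along the edge, so that $l_e(u)^2 = \mu_e(u)$. Since $\mu_e$ is linear in $u$, differentiating $l_e(\epsilon)^2 = \mu_e(u(\epsilon))$ gives $2 l_e(\epsilon) l_e'(\epsilon) = \mu_e(u'(\epsilon))$, hence $l_e'(0) = \mu_e(u'(0))/(2 l_e(0))$. Substituting into $\calR''(0)$ and assembling the Taylor expansion yields $\calR(\epsilon) = (\epsilon^2/2)\calR''(0) + \calO(\epsilon^3) = (\epsilon^2/4)\sum_e \theta_e'(0)\, l_e(0)^{-1}\mu_e(u'(0)) + \calO(\epsilon^3)$, as claimed.

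The main obstacle is clearly the Schl\"afli identity: everything else is bookkeeping with the product rule. I would either cite it as classical or include the short derivation above from the single-tetrahedron version. The only point requiring care is that $u(\epsilon)$ stays nondegenerate, so that the angles, lengths, and their $\epsilon$-derivatives are smooth, and remains inside $X_h$, so that $l_e^2 = \mu_e$ holds throughout the family; both hold for $|\epsilon|$ small because $u(0)$ is the Euclidean metric.
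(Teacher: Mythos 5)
Your proposal is correct and follows essentially the same route as the paper: both rest on the Regge--Schl\"afli identity $\sum_e l_e(\epsilon)\,\theta_e'(\epsilon)=0$ (which the paper simply cites from Regge) together with the computation $l_e'(0)=\mu_e(u'(0))/(2\,l_e(0))$. The only cosmetic difference is that you apply the identity to $\calR'(\epsilon)$ before differentiating a second time, whereas the paper Taylor-expands first and then also invokes the differentiated identity to eliminate the $\theta_e''(0)$ term; both yield $\calR''(0)=\sum_e\theta_e'(0)\,l_e'(0)$ and hence the stated expansion.
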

\begin{proof}
We have:
\begin{equation}
\theta_e(0)= 0.
\end{equation}
Consequently:
\begin{align}
\calR(\epsilon) & = \sum_e (\epsilon \theta_e'(0) + \epsilon^2/2 \ \theta_e''(0)) \ (l_e(0) + \epsilon l_e'(0)) \ + \ \calO(\epsilon^3)\\
& = \epsilon \sum_e \theta_e'(0)l_e(0) \ + \ \epsilon^2 \sum_e (\theta_e'(0) l_e'(0) + 1/2 \ \theta_e''(0)l_e(0)) \ + \ \calO(\epsilon^3).\label{eq:expr}
\end{align}
It is a remarkable property of Regge calculus, proved in \cite{Reg61}, that for any $\epsilon$: 
\begin{equation}
\sum_e \theta_e'(\epsilon)l_e(\epsilon) = 0.
\end{equation}
This handles the first term in (\ref{eq:expr}). Differentiating we get in addition:
\begin{equation}
\sum_e \theta_e''(0) l_e(0) + \theta_e'(0) l_e'(0)= 0.
\end{equation}
Inserting this in the second term of (\ref{eq:expr}) we get:
\begin{equation}
\calR(\epsilon) = \epsilon^2/2 \sum_e \theta_e'(0) l_e'(0)+ \calO(\epsilon^3).
\end{equation}
Finally consider an edge $e$ and denote its extremities by $x$ and $y$. We have $l_e(0) = | y - x|$ and $t_e = (y - x)/|y - x|$. We can write:
\begin{align}
l_e(\epsilon) & = ((y - x)^{\transp} u(\epsilon)(y - x))^{1/2}\\
& = (|y - x|^2 + \epsilon  (y - x)^{\transp}u'(0)(y - x) + \calO (\epsilon^2))^{1/2}\\
& = l_e(0)(1 + \epsilon/2 \ t_e^{\transp} u'(0)t_e + \calO (\epsilon^2),
\end{align}
so that:
\begin{align}
l_e'(0) &  = 1/2\  l_e(0)t_e^{\transp} u'(0)t_e,\\
& = 1/2\  l_e(0)^{-1} \mu_e(u'(0)).
\end{align}
This completes the proof.
\end{proof}

Now insert expressions (\ref{eq:sumjumps}) and (\ref{eq:mudist}) in (\ref{eq:linregge}) and compare with the expressions (\ref{eq:ctc1}) and (\ref{eq:ctc2}) for the $\curltcurl$ operator. We conclude:
\begin{theorem}
With the above notations we have the expansion:
\begin{equation}
\calR(u(\epsilon))= \epsilon^2/8 \ \langle \curltcurl u'(0), u'(0) \rangle + \calO(\epsilon^3).
\end{equation}
\end{theorem}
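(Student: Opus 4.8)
The plan is to treat this theorem as a bookkeeping synthesis of the two preceding propositions together with the local formula for $\curltcurl$ from Proposition \ref{prop:exprctc}. The starting point is the expansion (\ref{eq:linregge}), so the entire task reduces to re-expressing the sum $\sum_e \theta_e'(0)\, l_e(0)^{-1}\mu_e(u'(0))$ as the duality pairing $\langle \curltcurl u'(0), u'(0)\rangle$. No new analytic input is needed: the $\calO(\epsilon^3)$ control is already supplied by the proposition behind (\ref{eq:linregge}).

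First I would identify the derivative of the deficit angle with the scalar jump appearing in the distributional Saint-Venant formula. Comparing (\ref{eq:sumjumps}) with the definition (\ref{eq:ctc2}) of $\jump{u}_e$ — matching the local vectors $m_f, n_f$ and the face jump $u'_{f+1/2}-u'_{f-1/2}$ against $m_{ef}, n_{ef}$ and $\jump{u'(0)}_{ef}$ — gives $\theta_e'(0) = \tfrac12\,\jump{u'(0)}_e$. Substituting this into (\ref{eq:ctc1}) then yields $\curltcurl u'(0) = \sum_e 2\,\theta_e'(0)\, t_e t_e^{\transp}\delta_e$.

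Next I would pair this element of $X^2_h$ against $u'(0) \in X^1_h$ and invoke (\ref{eq:mudist}), which states $\langle t_e t_e^{\transp}\delta_e, u\rangle = \mu_e(u)/l_e$. This gives $\langle \curltcurl u'(0), u'(0)\rangle = 2\sum_e \theta_e'(0)\, l_e(0)^{-1}\mu_e(u'(0))$, i.e. the sum in (\ref{eq:linregge}) equals $\tfrac12\langle \curltcurl u'(0), u'(0)\rangle$. Inserting this into (\ref{eq:linregge}) and tracking the numerical factors produces $\calR(u(\epsilon)) = (\epsilon^2/4)\cdot\tfrac12\,\langle \curltcurl u'(0), u'(0)\rangle + \calO(\epsilon^3) = (\epsilon^2/8)\,\langle \curltcurl u'(0), u'(0)\rangle + \calO(\epsilon^3)$, as claimed.

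The only genuine obstacle is the consistency of the orientation conventions used in the local edge picture (the proposition yielding (\ref{eq:sumjumps})) versus the global complex (Proposition \ref{prop:exprctc}): one must check that the assignment of $(m_{ef}, n_{ef}, t_e)$ to each incident face is compatible with the cyclic labeling $f\mapsto f\pm 1/2$, so that $\jump{u'(0)}_{ef}$ is indeed taken in the direction of $n_{ef}$. This is what guarantees the clean factor $\tfrac12$ in $\theta_e'(0) = \tfrac12\,\jump{u'(0)}_e$, and hence the overall coefficient $\epsilon^2/8$. Once that sign-and-orientation dictionary is pinned down, the remaining argument is pure substitution.
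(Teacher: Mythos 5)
Your proposal is correct and follows exactly the paper's own argument, which consists precisely of inserting (\ref{eq:sumjumps}) and (\ref{eq:mudist}) into (\ref{eq:linregge}) and comparing with (\ref{eq:ctc1})--(\ref{eq:ctc2}); your identification $\theta_e'(0)=\tfrac12\jump{u'(0)}_e$ and the resulting factor $\epsilon^2/4\cdot\tfrac12=\epsilon^2/8$ are the intended bookkeeping. The orientation check you flag (matching $m_f,n_f$ and the cyclic ordering of sectors to $m_{ef},n_{ef}$ and the jump in the order of $n_{ef}$) is indeed the only point requiring care, and it works out as you describe.
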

It can be checked that $\curltcurl$ is also the operator appearing in the spatial part of the second variation of the Einstein Hilbert action. It suffices to write $\curltcurl$ in coordinates (index notation) and compare with the expression for the second variation provided in, for instance, \cite{Hoo01}.

\section{Eigenvalue approximation \label{sec:eigen}}

\paragraph{An abstract setting.}
In this paragraph we develop a convergence theory for some non-conforming eigenvalue approximations. It is a generalization of the point of view detailed in \cite{ChrWin10} for conforming approximations of semi-definite operators. Some of our arguments are closer to a discontinuous Galerkin way of thinking \cite{BufPer06} but we also need to cover the case of operators with spectrum in both ends of the real axis, so that coercivity should be replaced by $\inf \sup$ conditions \cite{Bab71}.

Let $O$ be a Hilbert space, with scalar product $\langle \cdot, \cdot \rangle$. The associated norm is denoted $| \cdot |$. Let $X$ be a dense subspace of $O$, which is also a Hilbert space such that the injection $X \to O$ is continuous. The norm of $X$ is denoted $\| \cdot \|$. We suppose that we have a symmetric continuous bilinear form $a$ on $X$. We do not require $a$ to be semi-definite. 
We are interested in the eigenvalue problem, to find $u \in X$ and $\lambda \in \bbR$ such that:
\begin{equation}\label{eq:eig}
\forall v \in X \quad a(u,v) = \lambda \langle u, v \rangle.
\end{equation}
We define:
\begin{align}
W &= \{ u \in X \ : \ \forall v \in X \quad a(u,v) = 0 \},\\
V &= \{ u \in X \ : \ \forall v \in W \quad \langle u, v \rangle = 0 \}.
\end{align}
These are closed subspaces of $X$ and we have a direct sum decomposition:
\begin{equation}
X = V \oplus W.
\end{equation}
We suppose that $W$ is closed in $O$ and that the injection $V \to O$ is compact. Let $\overline V$ be the closure of $V$ in $O$. Let $P$ be the orthogonal projection in $O$ with range $\overline V$ and kernel $W$.

We suppose that the map $X \to X^\star$, $u \mapsto a(u, \cdot)$ determines an isomorphism $V \to V^\star$, equivalently:
\begin{equation}
\infsup{u\in V}{v\in V}{|a(u,v)|}{\| u\| \, \| v\|} > 0.
\end{equation}

The dual of $X$ with $O$ as pivot space is denoted $X'$. Notice that we distinguish it from the space of continuous linear forms on $X$ which is denoted $X^\star$. For instance $O$ is a dense subspace of $X'$ but not of $X^\star$. The map $u \mapsto \langle u, \cdot \rangle$ defines an isomorphism $X' \to X^\star$. The notation $Y'$ will be used later, with the same meaning, for other dense subspaces $Y$ of $O$. 

Define a bounded operator $K : X' \to X$ as follows. To any $u \in X'$ associate $Ku=v\in V$ such that:
\begin{equation}
\forall w \in V \quad a(v, w) = \langle u, w \rangle.
\end{equation}
As an operator $O \to O$, $K$ is compact and selfadjoint. This guarantees that there is an orthonormal basis of $O$ consisting of eigenvectors of $K$. The non-zero eigenvalues are inverses of those of (\ref{eq:eig}), with corresponding eigenspaces.

We now turn to the the approximation of the eigenproblem (\ref{eq:eig}) by a Galerkin method.

Suppose that $(X_n)$ is a sequence of finite-dimensional subspaces of $O$, and that for each $n \in \bbN$ we have a symmetric bilinear form $a_n$ on $X_n$. We solve the eigenvalue problems: find $u \in X_n$ and $\lambda \in \bbR$ such that:
\begin{equation}\label{eq:galeig}
\forall v \in X_n \quad a_n(u,v) = \lambda \langle u, v \rangle.
\end{equation}
Decompose as before:
\begin{align}
W_n &= \{ u \in X_n \ : \ \forall v \in X_n \quad a(u,v) = 0 \},\\
V_n &= \{ u \in X_n \ : \ \forall v \in W_n \quad \langle u, v \rangle = 0 \}.
\end{align}
We have a direct sum decomposition:
\begin{equation}
X_n = V_n \oplus W_n.
\end{equation}
Define also $K_n: O \to O$ as follows. To any $u \in O$ associate $K_n u = v \in V_n$ such that:
\begin{equation}
\forall w \in V \quad a_n(v, w) = \langle u, w \rangle.
\end{equation}
Since $K_n$ has finite rank it is compact. It is also selfadjoint. Non-zero eigenvalues of $K_n$ are the inverses of those of (\ref{eq:galeig}), with corresponding eigenspaces.

For the eigenpairs of $K_n$ to converge to those of $K$ in a natural sense, the following should be achieved \cite{Cha83}:
\begin{equation}\label{eq:conveig}
\| K- K_n \|_{O \to O} \to 0.
\end{equation}
Our aim now is to devise sufficient conditions for this to hold. For insights into some difficulties that arise in the context of discretizations of the form (\ref{eq:galeig}), when $a$ has infinite dimensional kernel, see in particular \cite{BofBreGas00}.

For two real sequences $(a_n)$ and $(b_n)$, estimates of the form, there exists $C >0$, independent of the sequences, such that for all $n$:
\begin{equation}
a_n \leq C b_n,
\end{equation}
will be written:
\begin{equation}
a_n \cleq b_n \myor b_n \cgeq a_n.
\end{equation}
We also use the notation:
\begin{equation}
a_n \ceq b_n,
\end{equation}
when:
\begin{equation}
a_n \cleq b_n \myand b_n \cleq a_n.
\end{equation}

For the applications we have in mind, $X_n$ is not a subspace of $X$, but we can weaken the norm of $X$ to obtain a space containing $X_n$, without loosing essential compactness properties. Precise statements follow. We suppose that we have two more Hilbert spaces $X^-$ and $X^+$ with inclusions:
\begin{equation}
X^+ \subset X \subset X^- \subset O,
\end{equation}
which are continuous and have dense range. The norms of $X^+$ and $X^-$ are denoted $\|\cdot \|_+$ and $\| \cdot \|_-$ respectively. We suppose that $a$ is continuous on $X^+ \times X^-$. We define:
\begin{align}
W^+ &= \{ u \in X^+ \ : \ \forall v \in X^- \quad a(u,v) = 0 \},\\
W^- &= \{ u \in X^- \ : \ \forall v \in X^+ \quad a(u,v) = 0 \},\\
V^+ &= \{ u \in X^+ \ : \ \forall v \in W^+ \quad \langle u, v \rangle = 0 \},\\
V^- &= \{ u \in X^- \ : \ \forall v \in W^- \quad \langle u, v \rangle = 0 \}.
\end{align}
We have direct sum decompositions:
\begin{equation}
X^+ = V^+ \oplus W^+ \myand X^- = V^- \oplus W^-.
\end{equation}
We {suppose that $W^+$ is closed in $O$}, which yields:
\begin{equation}
W^+ = W = W^-.
\end{equation}
We suppose also that that we have the {inf-sup conditions}:
\begin{equation}\label{eq:isap}
\infsup{u\in V^+}{v\in V^-}{|a(u,v)|}{\| u\|_+ \, \| v\|_-} > 0 
. 
\end{equation}
It follows that $K$ is bounded $X^{-\prime} \to X^+$.

We also suppose that {the injection $V^- \to O$ is compact}. By duality the injection $\overline V \to X^{-\prime}$ is compact, so that, by composition, $K$ is compact as an operator $O \to X_+$.

Concerning our spaces $X_n$ we suppose that they are equipped with projections $Q_n : O \to X_n$ which are uniformly bounded $O \to O$ and map $W$ to $W_n$. Moreover we suppose that $\delta(W_n, W) \to 0$, the gap being calculated with the norm of $O$. Recall the definition:
\begin{equation}
\delta(W_n, W) = \sup_{u \in W_n} \inf_{v \in W} |u -v|/ |u| .
\end{equation}

We impose of course that for any $u \in O$ there is a sequence $u_n \in X_n$ such that $u_n \to u$ in $O$. We then have $Q_n u \to u $ in $O$.
\begin{lemma}\label{lem:split} Consider sequences $u_n = v_n + w_n$ with $v_n \in V_n$ and $w_n \in W_n$. Suppose that $u_n \to u$ in $O$, and decompose $u = v +w$  with $v \in \overline V$ and $w \in W$. Then $v_n \to v$ and $w_n \to w$  in $O$.
\end{lemma}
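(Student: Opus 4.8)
The plan is to express both splittings through $O$-orthogonal projections and reduce the claim to the strong convergence of these projections. The first observation is that the decomposition $X_n = V_n \oplus W_n$ is orthogonal in $O$, since by definition $V_n$ is the $O$-orthogonal complement of $W_n$ inside $X_n$; writing $\Pi_n$ for the orthogonal projection of $O$ onto $W_n$ we therefore have $w_n = \Pi_n u_n$ and $v_n = u_n - \Pi_n u_n$. Similarly, because $P$ is the orthogonal projection onto $\overline V$ with kernel $W$, we have $W = (\overline V)^{\perp}$, so $O = \overline V \oplus W$ is an orthogonal decomposition and $\Pi := I - P$ is the orthogonal projection onto $W$, with $w = \Pi u$ and $v = Pu$. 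Since each $\Pi_n$ is an orthogonal projection in $O$, hence a contraction ($|\Pi_n x| \le |x|$), I would split $|w_n - w| \le |\Pi_n(u_n - u)| + |\Pi_n u - \Pi u| \le |u_n - u| + |\Pi_n u - \Pi u|$; as $u_n \to u$, everything reduces to proving the strong convergence $\Pi_n u \to \Pi u$ for the single fixed vector $u$.

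To prove this strong convergence I would decompose $u = w + w^{\perp}$ with $w = \Pi u \in W$ and $w^{\perp} = Pu \in \overline V = W^{\perp}$, and treat the two pieces separately. For $w \in W$: since $Q_n$ maps $W$ into $W_n$ and $Q_n w \to w$ in $O$, and since $\Pi_n w$ is by definition the best $O$-approximation to $w$ from $W_n$, one has $|w - \Pi_n w| \le |w - Q_n w| \to 0$, hence $\Pi_n w \to w$. For $w^{\perp}$ the goal is $\Pi_n w^{\perp} \to 0$; here one sets $z_n = \Pi_n w^{\perp} \in W_n$ and uses self-adjointness and idempotence of $\Pi_n$ to write $|z_n|^2 = \langle z_n, w^{\perp} \rangle$. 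Letting $\hat z_n \in W$ denote the $O$-closest point of $W$ to $z_n$, the gap hypothesis gives $|z_n - \hat z_n| \le \delta(W_n, W)\,|z_n|$, while $\langle \hat z_n, w^{\perp}\rangle = 0$ because $\hat z_n \in W$ and $w^{\perp} \perp W$. Hence $|z_n|^2 = \langle z_n - \hat z_n, w^{\perp}\rangle \le \delta(W_n,W)\,|z_n|\,|w^{\perp}|$, so $|z_n| \le \delta(W_n,W)\,|w^{\perp}| \to 0$. Combining the two pieces yields $\Pi_n u \to \Pi u$.

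The main obstacle is precisely this second piece: one must rule out that the $W_n$-projection of a vector orthogonal to $W$ fails to vanish in the limit, and this is the only place where genuine geometric information about the discrete splitting is needed. It is handled by the one-sided gap bound $\delta(W_n, W) \to 0$, which controls how far unit vectors of $W_n$ can be from $W$; the complementary direction (that $W$ is approximable from $W_n$) is supplied instead by the interpolators $Q_n$. With the strong convergence $\Pi_n u \to \Pi u$ established, the displayed estimate gives $w_n = \Pi_n u_n \to \Pi u = w$, and then $v_n = u_n - w_n \to u - w = v$ by uniqueness of the orthogonal decomposition, which is the assertion.
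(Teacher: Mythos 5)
Your proof is correct. The paper's own argument looks different on the surface: it expands
$|u - u_n|^2 = |v - v_n|^2 + |w - w_n|^2 - 2\langle v, w_n\rangle - 2\langle v_n, w\rangle$
(the terms $\langle v, w\rangle$ and $\langle v_n, w_n\rangle$ vanishing by orthogonality) and shows the two cross terms tend to zero, which forces both squared differences to zero. But the two cross-term estimates correspond exactly to the two halves of your argument: the paper bounds $\langle v, w_n\rangle = \langle v, w_n - (I-P)w_n\rangle$ by $|v|\,|w_n|\,\delta(W_n,W)$, which is the same use of the one-sided gap plus $v \perp W$ as your estimate $|\Pi_n w^{\perp}| \le \delta(W_n,W)\,|w^{\perp}|$; and it bounds $\langle v_n, w\rangle = \langle v_n, w - Q_n w\rangle$ via $Q_n w \to w$, which is the same use of the interpolators as your best-approximation bound $|w - \Pi_n w| \le |w - Q_n w|$. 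What your packaging buys is modularity and a marginally stronger conclusion: you establish the strong pointwise convergence of the projections $\Pi_n \to I - P$ on $O$, from which the lemma follows for an arbitrary convergent sequence $u_n$, whereas the paper's computation is shorter but proves only the statement at hand. Two small points you rely on and should keep explicit: the nearest point $\hat z_n \in W$ exists because $W$ is assumed closed in $O$, and the orthogonal splitting $O = \overline{V} \oplus W$ that underlies $\Pi = I - P$ is precisely what the paper posits when it declares $P$ to be the orthogonal projection with range $\overline{V}$ and kernel $W$.
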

\begin{proof}
We have:
\begin{equation}
 0 \leftarrow |u - u_n|^2 = |v - v_n|^2 + |w-w_n|^2 - 2 \langle v, w_n \rangle - 2 \langle v_n, w \rangle.
\end{equation}
Considering the right hand side we have:
\begin{align}
|\langle v, w_n \rangle | & = |\langle v, w_n - (I - P) w_n\rangle| \leq |v| \, |w_n| \, \delta(W_n, W)\\
&\leq |v| \, |u_n| \, \delta(W_n, W) \to 0,  
\end{align}
and also:
\begin{align}
|\langle v_n, w \rangle | &= |\langle v_n, w - Q_n w \rangle|\leq |v_n| \ |w - Q_n w|\\
& \leq  |u_n| \ |w - Q_n w| \to 0.
\end{align}
This gives the lemma.
\end{proof}

The spaces $X_n$ are not necessarily subspaces of $X$, but always of $X^-$. Since $W$ is closed in $X^-$, the following proposition states that, as $n \to \infty$,  $\delta(V_n, V^-) \to 0$, the gap being calculated with the norm of $X^-$.
\begin{proposition}\label{prop:gap}
There is a sequence $\epsilon_n \to 0$ such that for all $u \in V_n$:
\begin{equation}\label{eq:gap}
| u - P u| \leq \epsilon_n \|u\|_-.
\end{equation}
\end{proposition}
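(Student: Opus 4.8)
The inequality \eqref{eq:gap} is homogeneous of degree one in $u$, so it suffices to produce $\epsilon_n \to 0$ bounding $\eta_n := \sup\{\,|u - Pu| : u \in V_n,\ \|u\|_- \le 1\,\}$ and then set $\epsilon_n = \eta_n$. I would argue by contradiction. If $\eta_n \not\to 0$, then after passing to a subsequence there are $u_n \in V_n$ with $\|u_n\|_- = 1$ and $|u_n - Pu_n| \ge c$ for a fixed $c>0$. Write $u_n = Pu_n + (I-P)u_n$. Since $V^- = X^-\cap W^\perp$ and $W^-=W$, the topological splitting $X^- = V^- \oplus W$ is exactly the $O$-orthogonal splitting $\overline V \oplus W$ restricted to $X^-$; hence $P$ maps $X^-$ boundedly onto $V^-$ and $Pu_n \in V^-$ with $\|Pu_n\|_- \lesssim \|u_n\|_- = 1$. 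The compactness of the injection $V^- \to O$ then yields a further subsequence along which $Pu_n \to v_\ast$ in $O$, with $v_\ast \in \overline V$.

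The heart of the matter is to show that the complementary (incompatible) part $w_n := (I-P)u_n \in W$ has an $O$-convergent subsequence; this is a discrete compactness statement, and this is where I expect the real work. Since $u_n \in V_n$ is $O$-orthogonal to $W_n$ and $Q_n(W) \subseteq W_n$, one has $\langle u_n, Q_n w_n\rangle = 0$, whence $|w_n|^2 = \langle u_n, w_n\rangle = \langle u_n, w_n - Q_n w_n\rangle$. The plan is to separate the part of $w_n$ seen by $W_n$ from the remainder: using the gap hypothesis $\delta(W_n,W)\to 0$ together with $Pu_n \to v_\ast \in W^\perp$, the $O$-orthogonal projection of $w_n$ onto $W_n$ is shown to tend to $0$, while the remaining piece is controlled through the approximation property $Q_n w \to w$ and the uniform boundedness of $Q_n$. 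It is precisely here that the compact embedding of $V^-$ and the commuting/approximation properties of $Q_n$ must be combined: the compact injection by itself governs only the $V^-$-component $Pu_n$, whereas the infinite-dimensional kernel $W$ is not compactly embedded, so the behaviour of $w_n$ cannot be read off from compactness of the embedding alone. I expect this $W$-component estimate to be the main obstacle.

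Granting that a subsequence of $u_n$ converges in $O$, say $u_n \to u_\ast$, the argument closes cleanly. Applying Lemma \ref{lem:split} to the decomposition $u_n = Pu_n + w_n$ with $Pu_n \in \overline V$ and $w_n \in W$, I get $Pu_n \to v_\ast$ and $w_n \to w_\ast$, where $u_\ast = v_\ast + w_\ast$ is the $\overline V \oplus W$ splitting of $u_\ast$. Passing to the limit in the relation $\langle u_n, Q_n\phi\rangle = 0$ for each fixed $\phi \in W$, and using $Q_n\phi \to \phi$, gives $\langle u_\ast, \phi\rangle = 0$ for all $\phi \in W$, i.e. $u_\ast \in W^\perp = \overline V$, so $w_\ast = 0$. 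Therefore $|u_n - Pu_n| = |w_n| \to 0$, contradicting $|u_n - Pu_n| \ge c$. The decisive step remains the middle one, namely proving that the incompatible part of a discretely harmonic element is relatively compact (and in fact null) in $O$; this is the genuine content of the proposition and rests on the interplay of the gap condition on $W_n$, the commuting projections $Q_n$, and the compactness of $V^- \to O$.
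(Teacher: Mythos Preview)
Your proposal has a genuine gap at exactly the point you yourself flag as the obstacle: controlling the kernel component $w_n = (I-P)u_n$. Your identity $|w_n|^2 = \langle u_n, w_n - Q_n w_n\rangle$ is correct and is the right starting point, but you then try to bound $w_n - Q_n w_n$ by splitting $w_n$ relative to $W_n$ and invoking the pointwise convergence $Q_n w \to w$. This cannot close as written: that convergence is not uniform on the (non-compact) unit ball of $W$, and $w_n$ depends on $n$, so ``$Q_n w_n \to w_n$'' has no meaning without additional input.

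The missing step is purely algebraic and is the core of the paper's argument. Since $Q_n$ is a \emph{projection onto} $X_n$ and $u_n \in V_n \subset X_n$, one has $Q_n u_n = u_n$, hence
\[
(I-Q_n)w_n \;=\; (I-Q_n)(u_n - P u_n) \;=\; -(I-Q_n)P u_n.
\]
So the quantity you are trying to estimate on the infinite-dimensional kernel $W$ is literally equal to one living on the $V^-$ side, where compactness \emph{is} available. Inserting this into your own identity gives $|w_n|^2 \leq |u_n|\,|(I-Q_n)P u_n|$, and $\|(I-Q_n)P\|_{X^- \to O} \to 0$ because $P:X^- \to O$ is compact and the $Q_n$ are uniformly bounded and pointwise convergent to the identity. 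This already proves the proposition directly, with no contradiction argument needed. The paper performs essentially this computation (expanding $|Pu - Q_n Pu|^2 = |Q_n(u-Pu) - (u-Pu)|^2$ and picking up an extra cross term controlled by $\delta(W_n,W)$), but the decisive idea is the same: use $Q_n u = u$ to transfer $(I-Q_n)$ from the $W$-part to the $V^-$-part.
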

\begin{proof}
For any $u \in V_n$ we have:
\begin{equation}
u - Q_nPu = Q_n(u-Pu) \in W_n.
\end{equation}
We write:
\begin{align}
|Pu - Q_n Pu|^2 &= |Q_n (u-Pu) - (u-Pu)|^2\\
& = |Q_n (u - Pu)|^2 + |u - Pu|^2 - 2 \langle Q_n(u-Pu), Pu \rangle,
\end{align}
from which it follows that:
\begin{equation}
|u - Pu|^2 \leq |Pu - Q_nPu|^2 + 2 \delta(W_n, W)\, |Q_n(u - Pu)|\, |P u|.
\end{equation}
Recall the uniform boundedness of the $Q_n : O \to O$. The first term on the right hand side is handled by the fact that $P$ is compact as a map $X^- \to O$. The second term is handled by the gap property of $W_n$.
\end{proof}

The forms $a_n$ are required to be consistent with $a$ in the following sense. For all $u \in X^+$ there is a sequence $u_n \in X_n$ such that $u_n \to u$ in $O$ and:
\begin{equation}\label{eq:conan}
\lim_{n \to \infty} \sup_{v \in X_n}\frac{|a(u, v) - a_n(u_n, v)|}{\| v \|_-} = 0.
\end{equation}
Moreover we suppose that we have the following weak inf-sup condition, uniform in $n$:
\begin{equation}\label{eq:isan}
1 \cleq \infsup{u \in V_n}{v\in V_n}{|a_n(u,v)|}{|u|\, \|v\|_-}. 
\end{equation}

\begin{proposition}
Under the above circumstances we have discrete eigenpair convergence in the sense of (\ref{eq:conveig}).
\end{proposition}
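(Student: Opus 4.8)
The plan is to establish the operator-norm convergence (\ref{eq:conveig}) by contradiction, using the compactness of $K$ as a map $O \to X^+$ to convert the pointwise hypotheses --- consistency (\ref{eq:conan}), the gap Proposition \ref{prop:gap}, and the splitting Lemma \ref{lem:split} --- into control that is uniform over the unit ball of $O$. As a preliminary I would record a uniform bound on the discrete solution operators: testing $a_n(K_n g, w) = \langle g, w\rangle$ with $w = K_n g \in V_n$, invoking the weak inf-sup condition (\ref{eq:isan}) and the continuous injection $X^- \hookrightarrow O$, gives $|K_n g| \cleq |g|$ with a constant independent of $n$, so that $(K_n)$ is uniformly bounded $O \to O$.

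Now suppose (\ref{eq:conveig}) fails. Then there are $\epsilon_0 > 0$, a subsequence, and unit vectors $g_n \in O$ with $|(K - K_n) g_n| \geq \epsilon_0$. By weak compactness of the unit ball of $O$, extract $g_n \rightharpoonup g$. Since $K$ is compact $O \to X^+$, the continuous solutions $\hat v_n := K g_n \in V^+$ converge \emph{strongly} in $X^+$ to $\xi := Kg \in V^+$. The goal is then to prove that the discrete solutions $v_n := K_n g_n \in V_n$ converge to this same $\xi$ in $O$, since that contradicts $|\hat v_n - v_n| \geq \epsilon_0$.

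The heart of the matter is a single inf-sup estimate. I would apply (\ref{eq:conan}) at the \emph{fixed} limit $\xi$ to obtain $\xi_n \in X_n$ with $\xi_n \to \xi$ in $O$ and vanishing consistency defect; combining this with the continuity of $a$ on $X^+ \times X^-$ and the strong convergence $\hat v_n \to \xi$ in $X^+$ upgrades it to $\rho_n := \sup_{w \in X_n} |a(\hat v_n, w) - a_n(\xi_n, w)| / \|w\|_- \to 0$. Splitting $\xi_n = \xi_n^V + \xi_n^W$ along $X_n = V_n \oplus W_n$, Lemma \ref{lem:split} gives $\xi_n^V \to \xi$ and $\xi_n^W \to 0$. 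Setting $z_n := v_n - \xi_n^V \in V_n$ and using $a_n(\xi_n^W, w) = 0$ for $w \in X_n$, one computes $a_n(z_n, w) = \langle g_n, w\rangle - a_n(\xi_n, w)$ for $w \in V_n$. I would replace $a_n(\xi_n, w)$ by $a(\hat v_n, w)$ up to $\rho_n \|w\|_-$, and evaluate $a(\hat v_n, w)$ by decomposing $w = Pw + (w - Pw)$ with $Pw \in V^-$ and $w - Pw \in W$: symmetry of $a$ kills $a(\hat v_n, w - Pw)$, the defining relation for $\hat v_n$ gives $a(\hat v_n, Pw) = \langle g_n, Pw\rangle$, and Proposition \ref{prop:gap} bounds $|\langle g_n, w - Pw\rangle| \leq \epsilon_n \|w\|_-$. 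Altogether $|a_n(z_n, w)| \cleq (\epsilon_n + \rho_n)\|w\|_-$, so the weak inf-sup (\ref{eq:isan}) forces $|z_n| \to 0$, whence $v_n \to \xi$ in $O$ and the contradiction follows.

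The main obstacle is the uniformity required to run this argument over the moving data $g_n$: the consistency hypothesis (\ref{eq:conan}) is only pointwise in $u \in X^+$, yet it must be applied to the varying sequence $\hat v_n = K g_n$. This is precisely where the compactness of $K$ into the \emph{stronger} space $X^+$ is indispensable --- it forces $\hat v_n$ to converge in the $\|\cdot\|_+$ norm, so that continuity of $a$ on $X^+ \times X^-$ transfers the consistency established at the single limit $\xi$ to all the $\hat v_n$ with a defect $\rho_n \to 0$. Without this extra regularity of the continuous solution one would have only weak convergence of $g_n$, no handle on the consistency term, and the estimate would collapse.
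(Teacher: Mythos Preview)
Your argument is correct. The paper's proof uses the same ingredients but organizes them differently: it introduces an auxiliary discrete projection $P_n : X^+ \to V_n$ defined by $a_n(P_n u, w) = a(u, w)$ for $w \in V_n$, shows via (\ref{eq:isan}) and (\ref{eq:conan}) that the $P_n$ are uniformly bounded $X^+ \to O$ and converge pointwise to $P$, observes the factorization $K_n|_{\overline V} = P_n K$, and then invokes compactness of $K : O \to X^+$ to upgrade pointwise to operator-norm convergence on $\overline V$. The $W$-component of the data is treated separately, directly from Proposition~\ref{prop:gap} and (\ref{eq:isan}). Your route is dual to this: instead of splitting the \emph{data} along $O = \overline V \oplus W$, you split the \emph{test function} $w \in V_n$ as $Pw + (w - Pw)$, which lets you handle both pieces in a single inf-sup estimate; and instead of the clean factorization $K_n = P_n K$, you run a contradiction/subsequence argument where compactness of $K$ into $X^+$ produces the strong limit $\xi$ at which the pointwise consistency (\ref{eq:conan}) can be anchored and then transferred to the moving $\hat v_n$. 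The paper's version is slightly more transparent structurally (the identity $K_n = P_n K$ on $\overline V$ makes the mechanism explicit and avoids subsequences), while yours is a bit more self-contained and shows clearly why the extra regularity $X^+$ is needed.
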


\begin{proof}
(i) Define $P_n : X^+ \to X_n$ as follows. To any $u\in X^+$ associate $P_nu = v\in V_n$ such that:
\begin{equation}
\forall w \in V_n \quad a_n(v, w)= a(v,w).
\end{equation}
It follows from (\ref{eq:isan}) that $P_n$ is uniformly bounded $X^+ \to O$. To get pointwise convergence, pick $u \in X^+$. Choose $u_n \in X_n$ converging to $u$ in $O$, from the consistence hypothesis (\ref{eq:conan}). Decompose $u_n = v_n + w_n$ with $v_n \in V_n$ and $w_n \in W_n$. By Lemma \ref{lem:split}, we have $v_n \to Pu$ in $O$. We also have:
\begin{align}
|P_n u - v_n| &\cleq \sup_{w \in V_n}\frac{|a_n(P_n u - v_n, w)|}{\|w\|_-} = \sup_{w \in V_n}\frac{|a(u, w) - a_n(u_n, w)|}{\|w\|_-} \to 0.
\end{align}
We deduce $P_n u \to P u$ in $O$.

We remark that for $u \in \overline V$ we have:
\begin{equation}
\forall v \in V_n \quad a_n(P_n K u, v) = a(Ku, v) = \langle u,v \rangle = a_n(K_n u, v),
\end{equation}
so that:
\begin{equation}
P_n K u = K_n u.
\end{equation}
Moreover, as already pointed out, $K$ is compact as an operator $O \to X^+$.

Combining these remarks we get:
\begin{equation}
\| K- K_n \|_{\overline V \to O} \to 0.
\end{equation}

(ii) For $u \in W$ we have:
\begin{equation}
\forall v \in V_n \quad a_n(K_n u, v) = \langle u,v \rangle = \langle u , v - Pv \rangle.
\end{equation}
Using (\ref{eq:isan}) and Proposition \ref{prop:gap}, we can write:
\begin{align}
| K_n u | & \cleq \sup_{v \in V_n}\frac{|a_n(K_nu, v)|}{\| v \|_-}=  \sup_{v \in V_n}\frac{\langle u, v - Pv\rangle}{\| v \|_-}\\
&\cleq \epsilon_n |u|.
\end{align}
This gives:
\begin{equation}
\| K_n \|_{W \to O} \to 0.
\end{equation}
Since $K$ is zero on $W$ this concludes the proof.
\end{proof}

\paragraph{Application to Regge calculus.}
We first define some function spaces that will allow us to use the preceding setting.

We define $O  = \rmL^2(S) \otimes \bbS$, equipped with the canonical scalar product, and let $a$ be the bilinear form on symmetric matrix fields, which, at least for smooth ones is defined by:
\begin{equation}
a(u,v)= \langle \curltcurl u, v \rangle.
\end{equation}
We define for any $\alpha \in [-1, 1]$:
\begin{equation}
X^\alpha = \{ u \in O \ : \ \curltcurl u \in \rmH^{-1+ \alpha}(S) \otimes \bbS \}.
\end{equation}
Fix now $\alpha \in ]0, 1/2[$ and put:
\begin{align}
X^+ & = X^{\alpha},\\
X^{\phantom{ +}} & = X^0,\\
X^- &= X^{-\alpha}.
\end{align}

\begin{proposition}
The required hypotheses are satisfied:
\begin{itemize}
\item $a$ defines a continuous bilinear form on $X$,
\item the kernel $W$ is closed in $O$,
\item $V$ is compactly embedded in $O$,
\item $a$ is invertible on $V \times V$.
\end{itemize}
Moreover:
\begin{itemize}
\item $a$ defines a continuous bilinear form on $X^+ \times X^-$,
\item the kernel $W^+$ is closed in $O$,
\item $V^-$ is compactly embedded in $O$,
\item the inf sup condition (\ref{eq:isap}) holds.
\end{itemize}
\end{proposition}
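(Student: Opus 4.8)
The plan is to diagonalize everything by passing to Fourier series on the flat torus $S$, so that the second order operator $\curltcurl$ becomes a matrix symbol and every claim reduces to a pointwise statement in the frequency variable. Writing $u(x)=\sum_\xi \hat u(\xi)e^{i\xi\cdot x}$ with $\xi$ in the dual lattice, and using the computation behind Proposition~\ref{prop:exprctc} (take $\curl$ of the rows of $u$, transpose, take $\curl$ again), the symbol of $\curltcurl$ is the linear map $s(\xi):\bbS\to\bbS$, $A\mapsto -(\Skew\xi)A(\Skew\xi)$; its overall sign is immaterial for what follows. Three features of $s(\xi)$ drive the whole argument: it is homogeneous of degree $2$ in $\xi$; it is symmetric for the matrix inner product $\langle A,B\rangle=\tr(AB)$, by cyclicity of the trace; and, for $\xi\neq0$, its kernel $\{A\in\bbS:(\Skew\xi)A(\Skew\xi)=0\}$ coincides with the image of the deformation symbol $v\mapsto \tfrac12(\xi v^{\transp}+v\xi^{\transp})$. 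Hence the symbol sequence of $\defo,\curltcurl,\div$ is exact away from $\xi=0$: the elasticity complex of Theorem~\ref{theo:comm} is an elliptic complex, and $s(\xi)$ restricts to an invertible symmetric map on $R(\xi):=(\ker s(\xi))^{\perp}=\ran s(\xi)$ with two sided bounds $c|\xi|^2|A|\le |s(\xi)A|\le C|\xi|^2|A|$ for $A\in R(\xi)$.

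From this all the kernel and compactness statements follow at once. Since $W=\{u\in O:\curltcurl u=0\}$ is the kernel of the continuous operator $\curltcurl:O\to \rmH^{-2}(S)\otimes\bbS$, it is closed in $O$; in Fourier it is just $\{u:\hat u(\xi)\in\ker s(\xi)\ \forall\xi\}$, a description that does not see the regularity index, so $W^+=W=W^-$ as required. Its orthogonal complement $V=W^\perp$ is characterised by $\hat u(\xi)\in R(\xi)$, where the lower symbol bound gives $|\hat u(\xi)|\le c^{-1}|\xi|^{-2}|s(\xi)\hat u(\xi)|$. Summing against the weight $(1+|\xi|^2)^{s+1}$, this upgrades $\curltcurl u\in \rmH^{s}$ to $u\in \rmH^{s+2}$ modulo the finitely many low frequency (and zero) modes, which are controlled by $\|u\|_{\rmL^2}$. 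Taking $s=0$ gives $V\hookrightarrow \rmH^1(S)\otimes\bbS$ boundedly and $s=-1-\alpha$ gives $V^-\hookrightarrow \rmH^{1-\alpha}(S)\otimes\bbS$; since $1-\alpha>0$, Rellich's theorem on the compact manifold $S$ yields the compact injections $V\to O$ and $V^-\to O$.

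For continuity of $a$ I would use the Hodge splitting itself. Because $a$ is symmetric and $a(u,\cdot)=0$ whenever $\curltcurl u=0$, one has $a(u,v)=a(u_V,v_V)$, where $u_V,v_V$ are the $V$ components; these inherit the same $\curltcurl$ and, by the elliptic gain above, lie in $\rmH^{1}$ (for $X=X^0$) or in $\rmH^{1+\alpha}$, $\rmH^{1-\alpha}$ (for $X^+\times X^-$), with norms controlled by $\|u\|_{+}$, $\|v\|_{-}$. The pairing $\langle\curltcurl u_V,v_V\rangle$ then makes sense through the dual Sobolev exponents $-1+\alpha$ and $1-\alpha$ (respectively $\mp1$), giving $|a(u,v)|\le \|\curltcurl u_V\|_{\rmH^{-1+\alpha}}\|v_V\|_{\rmH^{1-\alpha}}\lesssim \|u\|_{+}\|v\|_{-}$. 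Finally, the invertibility of $a$ on $V\times V$ and the inf--sup condition~(\ref{eq:isap}) reduce, by Parseval, to the pointwise invertibility of $s(\xi)$ on $R(\xi)$: for $u\in V^+$ one tests against the $v\in V^-$ with $\hat v(\xi)=(1+|\xi|^2)^{-1}s(\xi)\hat u(\xi)$, and the uniform bounds on $s(\xi)|_{R(\xi)}$ show this realises the supremum up to a fixed constant, hence a positive inf--sup constant.

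The main obstacle is precisely that $a$ is indefinite, so no coercivity or Cea argument is available and the positivity of the inf--sup constant must come from the \emph{invertibility} of the symbol on the complement of its kernel rather than from a sign. The leverage is that one fact, the two sided bound $|s(\xi)A|\approx|\xi|^2|A|$ on $R(\xi)$, simultaneously delivers elliptic regularity (hence compactness of $V,V^-$ and, through the Hodge decomposition, continuity of $a$) and the inf--sup estimate. Care is needed only in bookkeeping the finite dimensional low frequency space (the constant fields and the topological cohomology of the complex on the torus), which lies entirely in $W$ and is harmless, and in checking that the decomposition is compatible with the whole scale $X^\alpha$, $-1\le\alpha\le1$, so that $u_V\in X^\alpha$ whenever $u\in X^\alpha$.
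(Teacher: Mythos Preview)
Your approach is essentially the same as the paper's: both pass to Fourier series on the torus, compute the symbol $\Skew(\xi)\,A\,\Skew(\xi)$ of $\curltcurl$, identify its kernel with the image of the $\defo$ symbol, and derive all the claims from Parseval plus the pointwise behaviour of the symbol on the complement of its kernel. The only cosmetic difference is that the paper explicitly diagonalizes the symbol on $R(\xi)$ --- exhibiting three orthonormal eigenvectors $\sigma_1(\xi),\sigma_2(\xi),\sigma_3(\xi)$ with eigenvalues $-|\xi|^2,+|\xi|^2,-|\xi|^2$ --- whereas you package the same information as the two-sided bound $|s(\xi)A|\approx|\xi|^2|A|$ on $R(\xi)$; either form yields the continuity, compactness and inf--sup statements in the same way.
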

\begin{proof}
We base the proof on Fourier analysis. For any $\xi \in \bbV$ we denote by $F(\xi): \bbV \to \bbC$ the associated Fourier mode:
\begin{equation}
F(\xi) : x \mapsto \exp(i \xi \cdot x).
\end{equation}
For any $a \in \bbS$ we have:
\begin{equation}
\curltcurl (a F(\xi)) = \Skew(\xi) a \Skew(\xi) \, F(\xi).
\end{equation}
For $\xi \neq 0$ choose two normalized vectors $\xi_1, \xi_2 \in \bbV$ such that $(\xi, \xi_1, \xi_2)$ is an oriented orthogonal basis of $\bbV$. We make some remarks:

First, $a \in \bbS$ satisfies $a \xi = 0$ iff $a$ is a linear combination of the three matrices:
\begin{align}
\sigma_1(\xi) &= \xi_1 \xi_1^{\transp} + \xi_2 \xi_2^{\transp},\\
\sigma_2(\xi) &= \xi_1 \xi_2^{\transp} + \xi_2 \xi_1^{\transp},\\
\sigma_3(\xi) &= \xi_1 \xi_1^{\transp} - \xi_2 \xi_2^{\transp}.
\end{align}
We compute:
\begin{align}
\Skew(\xi) \sigma_1(\xi) \Skew(\xi) &= -|\xi|^2 \sigma_1(\xi),\\
\Skew(\xi) \sigma_2(\xi) \Skew(\xi) &= \phantom{+}|\xi|^2 \sigma_2(\xi),\\
\Skew(\xi) \sigma_3(\xi) \Skew(\xi) &= -|\xi|^2 \sigma_3(\xi).
\end{align}

Second, $a \in \bbS$ is orthogonal to these three matrices iff:
\begin{equation}
\Skew(\xi) a \Skew(\xi) = 0.
\end{equation}
In fact this occurs iff $a$ can be written, for some (uniquely determined) $b \in \bbV$:
\begin{equation}
a =  b\xi^{\transp} + \xi b^{\transp}.
\end{equation}

Since we restricted attention to periodic boundary conditions, $\rmL^2(S)\otimes \bbC$ is spanned by Fourier modes:
\begin{equation}
F(k) \ : \ k \in (2 \pi/ l_1) \bbZ \times (2 \pi/ l_2) \bbZ \times (2 \pi/ l_3) \bbZ.
\end{equation}
The preceding remarks then provide a Hilbertian basis of $O$ consisting of eigenvectors for $\curltcurl$. Taking into account the characterization of Sobolev spaces in terms of Fourier series, all the claimed results follow.
\end{proof}

We consider a quasi-uniform sequence of simplical meshes $\calT_h$ of $S$, with mesh width $h \to 0$. Attached to $\calT_h$ we denote as before by $X_h= X^1_h$ the space of Regge metrics, and equip it with the bilinear form $a_h$ induced by $a$, defined for $u,v \in X^1_h$ by:
\begin{equation}
a_h(u,v) = \langle \curltcurl u, v \rangle,
\end{equation}
with respect to the canonical duality pairing $\langle \cdot, \cdot \rangle$  on $X^2_h \times X^1_h$.

The elements of $X^2_h$ are not in $\rmH^{-1}(S) \otimes \bbS$ since traces on edges are not well defined in $\rmH^1(S)$, therefore $X_h$ is not a subspace of $X$. One might therefore be reluctant to call $a_h$ the restriction of $a$. However, consistence of $a_h$ with $a$ is not in doubt. We remark also that $W_h$ is a subspace of $W$ (since $X^1_h$ acts as degrees of freedom for $X^2_h$) so that $\delta(W_h, W) = 0$. We will use that traces on edges are well defined $\rmH^{1+\alpha}(S) \to \rmL^2(e)$, from which it follows that $X_h$ is in $X^-$.

\begin{proposition} The spaces $X_h$ can be equipped with projections $Q_h$ that are uniformly bounded $O \to O$ and map $W$ to $W_h$.
\end{proposition}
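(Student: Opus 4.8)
The plan is to construct the projection $Q_h$ explicitly using the degrees of freedom already available, namely the edge functionals $\mu_e$, while taking care that they make sense on all of $O = \rmL^2(S) \otimes \bbS$. The functionals $\mu_e$ of equation (\ref{eq:mue}) are only defined for fields with well-behaved traces on edges, so they cannot be applied directly to an arbitrary $\rmL^2$ matrix field. First I would therefore smooth the input: fix a standard mollifier and, given $u \in O$, set $u^\star$ to be a local regularization of $u$ at scale comparable to the mesh width $h$ (a convolution adapted to the flat torus $S$). Then define $Q_h u = I^1_h u^\star = I_h u^\star \in X_h$, using the interpolator of Theorem \ref{theo:comm}. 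Because $u^\star$ is smooth, the functionals $\mu_e(u^\star)$ are well defined and $Q_h u$ lands in $X_h$.

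The next step is to verify the two asserted properties. For uniform $O \to O$ boundedness, I would combine three standard ingredients adapted to a quasi-uniform mesh: first, the mollification is bounded $\rmL^2 \to \rmL^2$ uniformly; second, on each tetrahedron the smoothed field $u^\star$ has its $\rmL^2$ norm and its local Sobolev norms controlled, by quasi-uniformity, in terms of the $\rmL^2$ mass of $u$ over a fixed-overlap neighborhood; third, a scaling (Bramble--Hilbert type) argument on a reference simplex shows that $\|I_h u^\star\|_{\rmL^2(T)} \cleq \|u^\star\|_{\rmL^2(\tilde T)}$ with constant independent of $h$, where $\tilde T$ is a mesh-sized patch around $T$. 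Summing over tetrahedrons and using finite overlap then yields $|Q_h u| \cleq |u|$ with a constant independent of $h$, which is exactly uniform boundedness.

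For the requirement that $Q_h$ map $W$ into $W_h$, recall that $W = \{ u \in X : a(u, \cdot) = 0\}$ consists precisely of those symmetric matrix fields annihilated by $\curltcurl$, and that $W_h = \{u \in X_h : \curltcurl u = 0\}$ is already a subspace of $W$ (as noted in the text, $\delta(W_h, W) = 0$). The clean way to obtain the mapping property is to exploit the commuting-diagram structure of Theorem \ref{theo:comm}: if $u \in W$ is in the kernel of $\curltcurl$, then on the discrete side $\curltcurl I^1_h u^\star = I^2_h \curltcurl u^\star$, and for a smoothed kernel field $\curltcurl u^\star$ is small (and vanishes in the limit), so one must arrange the construction so that the discrete curl of $Q_h u$ is exactly $0$. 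Concretely, I would compose the mollification with the continuous orthogonal projection onto $\ker \curltcurl$ (available from the Fourier decomposition in the preceding proposition) before interpolating; since this projection commutes with convolution on the torus and preserves the kernel, $u \in W$ gives $u^\star \in \ker \curltcurl$ smooth, whence $\curltcurl Q_h u = I^2_h \curltcurl u^\star = 0$, i.e. $Q_h u \in W_h$.

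The main obstacle I anticipate is reconciling these two demands simultaneously: the mollification scale and the interpolation must be compatible enough that the $O \to O$ bound stays uniform in $h$, while the kernel-preservation must be exact rather than merely asymptotic. The delicate point is that naive mollification does not commute with the simplicial interpolator $I^1_h$, so the equality $\curltcurl Q_h u = I^2_h \curltcurl u^\star$ must be invoked through the commuting diagram, and one has to confirm that the Fourier-based kernel projection (which is nonlocal) can be combined with a local interpolation estimate without destroying the uniform bound. I expect this to require choosing the smoothing radius proportional to $h$ and verifying that the projection onto $\ker \curltcurl$, being an $\rmL^2$-orthogonal projection, costs nothing in the norm estimate; once that compatibility is pinned down, the two properties follow from the scaling and commutativity arguments sketched above.
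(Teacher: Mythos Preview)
Your construction has a genuine gap: the map $Q_h u = I_h u^\star$ (interpolate a mollification) is \emph{not a projection}. If $u\in X_h$ then $u^\star$ is a smooth field close to $u$ but not equal to it, and $I_h u^\star$ will in general differ from $u$; so $Q_h|_{X_h}\neq \id$. This matters for the application: in the abstract framework the identity $u - Q_n P u = Q_n(u-Pu)$ for $u\in V_n$ (Proposition~\ref{prop:gap}) relies precisely on $Q_n u = u$ for $u\in X_n$. The paper repairs this by the Sch\"oberl/AFW trick: with $R_h$ the convolution at scale $\epsilon h$ one chooses $\epsilon$ so small that $\|(\id - I_h R_h)|_{X_h}\|_{O\to O}\leq 1/2$, hence $I_h R_h|_{X_h}$ is invertible on $X_h$, and sets
\[
Q_h = (I_h R_h|_{X_h})^{-1}\, I_h R_h.
\]
This is now a genuine projection onto $X_h$, inherits the uniform $O\to O$ bound, and since $I_h R_h$ commutes with $\curltcurl$ in the sense of the diagram (so it maps $W$ to $W_h$ and $W_h$ to $W_h$), the correction factor also preserves $W_h$.

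A secondary point: your detour through a Fourier projection onto $\ker\curltcurl$ is unnecessary. Convolution on the torus commutes \emph{exactly} with the constant-coefficient operator $\curltcurl$, so if $u\in W$ then $\curltcurl u^\star = (\curltcurl u)^\star = 0$ identically, not merely ``small''. The commuting diagram then gives $\curltcurl I^1_h u^\star = I^2_h \curltcurl u^\star = 0$ directly. So your kernel-preservation argument was essentially right before you complicated it; what is missing is only the projection property, and that is exactly what the paper's inversion step supplies.
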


\begin{proof}
We use the technique of  \cite{ArnFalWin06}, which was based on the earlier  works \cite{Sch08}\cite{Chr07NM}.

Let $\phi: \bbR^3 \to \bbR$ be smooth, non-negative with support in the unit ball and integral $1$. Define its scaling by:
\begin{equation}
\phi_{h}(x) = h^{-3} \phi(h^{-1} x).
\end{equation}
Let $R^\epsilon_h$ be regularization by convolution by $\phi_{\epsilon h}$. Remark that it commutes with constant coefficient differential operators. Composing with the already defined interpolator $I_h$ we get an operator $I_h R^\epsilon_h : O \to X_h$. We can fix a small $\epsilon$ such that for all $h$:
\begin{equation} 
\| (\id - I_h R_h)|_{X_h} \|_{O \to O}\leq 1/2.
\end{equation}
We may then put:
\begin{equation}
Q_h = (I_h R_h|_{X_h})^{-1}I_h R_h,
\end{equation}
to get the required projection.
\end{proof}

\begin{proposition} \label{prop:wis} We have an estimate, uniform in $h$: 
\begin{equation}
\forall u \in X_h \quad \|P u\|_- \cleq  \sup_{v \in X_h} \frac{|a_h(u,v)|}{|v|}.
\end{equation}
\end{proposition}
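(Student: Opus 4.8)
The plan is to realize the norm $\|Pu\|_-$ through the continuous inf-sup condition (\ref{eq:isap}), producing a test direction in $V^+$, and then to discretize that direction with the Regge interpolator $I_h$, exploiting the exact commutation $\mu_e \circ I_h = \mu_e$ so that no consistency error appears.

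First I would record the structural identity underlying everything. For $u \in X_h$ decompose $u = Pu + (u-Pu)$ with $u - Pu \in W = \ker \curltcurl$; hence $\curltcurl Pu = \curltcurl u$, which by Proposition \ref{prop:exprctc} is the edge measure $\sum_e \jump{u}_e\, t_e t_e^{\transp} \delta_e \in X^2_h$, placed in $\rmH^{-1-\alpha}\otimes \bbS$ by the edge trace estimate $\rmH^{1+\alpha}(S) \to \rmL^2(e)$. In particular $Pu \in X^-$, and since $Pu \in \overline V$ is $O$-orthogonal to $W = W^-$, in fact $Pu \in V^-$, so the continuous theory applies to it.

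Because $a$ is symmetric, the inf-sup (\ref{eq:isap}) holds with the roles of $V^+$ and $V^-$ interchanged; applied to $Pu \in V^-$ it furnishes $z \in V^+$, normalized so that $\|z\|_+ \lesssim 1$, with $a(Pu,z) \gtrsim \|Pu\|_-$. From the Fourier diagonalization of the preceding proposition, on $\overline V$ the norm $\|\cdot\|_+$ is equivalent to the $\rmH^{1+\alpha}$-norm, so $z$ is a genuine $\rmH^{1+\alpha}$ symmetric matrix field, whence its edge traces, the degrees of freedom $\mu_e(z)$, and the interpolate $I_h z \in X_h$ are all well defined. The crucial point is that testing with $v = I_h z$ reproduces $a(Pu,z)$ \emph{exactly}: using the pairing formula (\ref{eq:mudist}), the expression $\curltcurl u = \sum_e \jump{u}_e\, t_e t_e^{\transp}\delta_e$, and $\mu_e(I_h z) = \mu_e(z)$,
\[
a_h(u, I_h z) = \langle \curltcurl u, I_h z\rangle = \sum_e \jump{u}_e\, \mu_e(I_h z)/l_e = \sum_e \jump{u}_e\, \mu_e(z)/l_e = \langle \curltcurl Pu, z\rangle = a(Pu, z).
\]
It then remains to bound $|I_h z|$. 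I would establish the uniform stability $|I_h z| \lesssim \|z\|_+$ by a scaling argument on the quasi-uniform mesh: on each tetrahedron $|\mu_e(z)| \lesssim h^{3/2}\|z\|_{\rmL^2(e)}$ and $\|\rho_e\|_{\rmL^2} \lesssim h^{-1/2}$, while the scaled trace inequality gives $\|z\|_{\rmL^2(e)}^2 \lesssim h^{-2}\|z\|_{\rmL^2(T)}^2 + h^{2\alpha}|z|_{\rmH^{1+\alpha}(T)}^2$; summing over $T$ yields $|I_h z| \lesssim \|z\|_{\rmH^{1+\alpha}} \approx \|z\|_+$. Combining these facts,
\[
\sup_{v \in X_h}\frac{|a_h(u,v)|}{|v|} \ge \frac{|a_h(u,I_h z)|}{|I_h z|} = \frac{|a(Pu,z)|}{|I_h z|} \gtrsim \frac{\|Pu\|_-}{\|z\|_+} \gtrsim \|Pu\|_-.
\]

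The main obstacle is this last uniform interpolation estimate: it is exactly here that the restriction $\alpha > 0$ is indispensable, since for $\alpha = 0$ the edge traces of $\rmH^1$ fields are not controlled in $\rmL^2(e)$ and the interpolator fails to be $\rmL^2$-stable. A secondary point requiring care is the justification that the abstract $\rmH^{-1-\alpha}$--$\rmH^{1+\alpha}$ duality pairing of $\curltcurl Pu$ with the (non-continuous, since $\alpha < 1/2$) field $z$ genuinely coincides with the sum of edge integrals $\sum_e \jump{u}_e\,\mu_e(z)/l_e$; this is precisely the content of realizing $X^2_h \subset \rmH^{-1-\alpha}\otimes\bbS$ through edge traces, and it is what allows the exact identity above to replace any quantitative consistency bound.
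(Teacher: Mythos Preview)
Your argument is correct, but it follows a different route from the paper's own proof.

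The paper proceeds directly: writing $\curltcurl u = \sum_e v_e\, t_e t_e^{\transp}\delta_e$, it takes as test function the Regge element with the \emph{same} edge coefficients, $v = \sum_e v_e\,\rho_e$. Then $a_h(u,v) = \sum_e v_e^2\int_e t_e^{\transp}\rho_e t_e \ceq \sum_e v_e^2$, while the scaling estimate $|v| \ceq h^{-1/2}(\sum_e v_e^2)^{1/2}$ together with its dual, $\|\sum_e v_e\, t_e t_e^{\transp}\delta_e\|_{\rmH^{-1-\alpha}} \cleq h^{-1/2}(\sum_e v_e^2)^{1/2}$ (obtained from the scaled edge trace $\rmH^{1+\alpha}(T)\to\rmL^2(e)$), give $a_h(u,v)/|v| \cgeq \|\curltcurl u\|_{\rmH^{-1-\alpha}} \cgeq \|Pu\|_-$. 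No continuous test direction is ever invoked.

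Your approach is the Fortin-type pattern: realize the continuous inf-sup with an ideal $z\in V^+$, then discretize by $I_h$, relying on the exact consistency $a_h(u,I_hz)=a(Pu,z)$ (which follows from $\mu_e\circ I_h=\mu_e$ and $\curltcurl u\in X^2_h$) and on the uniform $\rmL^2$-stability of $I_h$ on $\rmH^{1+\alpha}$. This is more portable --- it is the standard template for deducing discrete inf-sup from continuous inf-sup plus a stable interpolator --- and it makes transparent why $\alpha>0$ enters (edge traces and stability of $I_h$). The paper's version is shorter and entirely self-contained, needing only the pair of scaling identities above, and avoids having to identify $V^+$ with $\rmH^{1+\alpha}$ via Fourier. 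Both proofs ultimately rest on the same scaled edge-trace estimate. One small slip of phrasing: the equivalence $\|\cdot\|_+\approx\|\cdot\|_{\rmH^{1+\alpha}}$ holds on $V^+$, not on all of $\overline V$ (the latter is only an $\rmL^2$-closure); since your $z$ lies in $V^+$ this does not affect the argument.
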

\begin{proof}
We first evaluate the norm of the restriction operator from a tetrahedron $T\in \calT_h$ to an edge $e$ in norms $\rmH^{1+\alpha}(T) \to \rmL^2(e)$.

We let $\hat T$ be a reference tetrahedron of diameter $1$. We suppose that the scaling $x \mapsto hx$, maps the tetrahedron $\hat T$ to $ T$ and the edge $\hat e$ to $ e$. For  a $k$-multilinear form $u$ on $T$, its pullback $\hat u$ to $\hat T$ satisfies, for $x \in \hat{T}$:
\begin{equation}
\hat u (x)= h^k u (hx).
\end{equation}
With this definition, we remark that $\nabla$ commutes with the pullback by scaling maps. Specializing to the case $k=2$ we have the scaling estimates:
\begin{align}
 \| u \|_{\rmL^2(T)} & = h^{-1/2} \| \hat u (x) \|_{\rmL^2(T)}, \\
 \| \nabla u \|_{\rmL^2(T)} & = h^{-3/2} \| \nabla \hat u (x) \|_{\rmL^2(T)}, \\
 | u |_{\rmH^\alpha(T)} & = h^{-3/2 - \alpha } | \hat u (x) |_{\rmH^\alpha(T)}.
\end{align}
From this we deduce:
\begin{align}
| \ts \int_e t_e^{\transp} u  t_e | & = h^{-1} |\ts \int_{\hat{e}} t_{\hat{e}}^{\transp}  u   t_{\hat{e}}|\\
 & \cleq  h^{-1} \| \hat{u} \|_{\rmH^{1+ \alpha}(\hat{T})}\\
 & \cleq  h^{-1/2} \| u \|_{\rmH^{1+ \alpha}(T)}.
\end{align}
Therefore, summing over all tetrahedrons:
\begin{equation}
\big(\sum_e ( \ts \int_e t_e^{\transp} u t_e)^2 \big )^{1/2} \cleq h^{-1/2} \| u \|_{\rmH^{1+ \alpha}(S)}.
\end{equation}
By duality we get, for any family of reals $v_e \in \bbR$ attached to the edges $e$:
\begin{equation}
\| \sum_e v_e t_e t_e^{\transp} \delta_e \|_{\rmH^{-1- \alpha}(S)} \cleq h^{-1/2} (\sum_e v_e^2)^{1/2}.
\end{equation}
By scaling, for such a family $(v_e)$ we also have:
\begin{equation}
\| \sum_e v_e \rho_e \|_{\rmL^2(S)} \ceq h^{-1/2} (\sum_e v_e^2)^{1/2}.
\end{equation}
Pick now $u \in X_h$ and define the family $(v_e)$ by:
\begin{equation}
 \curltcurl u = \sum_e v_e t_e t_e^{\transp} \delta_e.
\end{equation}
We define $v \in X_h$ by:
\begin{equation}
 v = \sum_e v_e \rho_e ,
\end{equation}
and can now write:
\begin{align}
\frac{ | \langle \curltcurl u, v \rangle | }{ | v | }& = \frac{ | \sum_e v_e^2  \int_e t_e^{\transp} \rho_e t_e| }{ | v | }\\
& \cgeq h^{-1/2} (\sum_e v_e^2)^{1/2}\\ 
& \cgeq  \| \curltcurl u\|_{\rmH^{-1- \alpha}(S)}.
\end{align}
This completes the proof.
\end{proof}

\begin{corollary}\label{cor:infsupd} There is a lower bound uniform in $h$: 
\begin{equation}
\inf_{u \in V_h} \sup_{v \in V_h} \frac{|a_h(u,v)|}{|u| \|v\|_{-}} = \inf_{u \in V_h} \sup_{v \in V_h} \frac{|a(u,v)|}{\| u\|_- |v|} \cgeq 1.
\end{equation}
\end{corollary}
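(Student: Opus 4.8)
The first equality is soft. On $X_h$ the forms $a_h$ and $a$ coincide --- for $u,v\in X_h$ both equal $\langle \curltcurl u, v\rangle$ in the $X^2_h\times X^1_h$ duality --- and this common form is symmetric, since $\curltcurl$ is formally self-adjoint, whence $a(u,v)=a(v,u)$. Relabelling $u\leftrightarrow v$ in the left-hand quotient and using this symmetry turns $|a_h(u,v)|/(|u|\,\|v\|_-)$ into $|a(u,v)|/(\|u\|_-\,|v|)$, so that the only residual difference between the two displayed quantities is the order in which the infimum and supremum are taken. For a fixed bilinear form on a finite-dimensional space this order is immaterial: both iterated extrema equal the smallest singular value --- computed with the norms $|\cdot|$ and $\|\cdot\|_-$ placed in the two arguments --- of the operator induced by $a$ on $V_h$, equivalently of its adjoint, which share their singular values. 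Thus the two quantities agree, and it suffices to bound the right-hand one from below; this is exactly the weak inf-sup condition (\ref{eq:isan}).

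So I would fix $u\in V_h$ and first replace the supremum over $V_h$ by one over all of $X_h$, where Proposition \ref{prop:wis} applies. These suprema in fact coincide: writing $v\in X_h$ as $v=v_V+v_W$ with $v_V\in V_h$, $v_W\in W_h$ --- an $O$-orthogonal splitting, so $|v|\ge|v_V|$ --- symmetry and the defining property of $W_h$ give $a(u,v_W)=a(v_W,u)=0$, hence $a(u,v)=a(u,v_V)$ and $|a(u,v)|/|v|\le|a(u,v_V)|/|v_V|$. Consequently $\sup_{v\in X_h}|a_h(u,v)|/|v|=\sup_{v\in V_h}|a_h(u,v)|/|v|$, and Proposition \ref{prop:wis} yields
\[
\|Pu\|_-\cleq \sup_{v\in V_h}\frac{|a_h(u,v)|}{|v|}.
\]

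It remains to pass from $\|Pu\|_-$ to $\|u\|_-$ on the left, and this is the crux. Since $P$ is the $O$-orthogonal projection onto $\overline V$ with kernel $W$, the difference $u-Pu$ lies in $W$. On $W$ the operator $\curltcurl$ vanishes, so the graph norm defining $X^-=X^{-\alpha}$ reduces there to the norm of $O$, i.e. $\|w\|_-=|w|$ for every $w\in W$. Proposition \ref{prop:gap} therefore upgrades from $O$ to $X^-$, giving $\|u-Pu\|_-=|u-Pu|\le\epsilon_h\|u\|_-$ with $\epsilon_h\to 0$. By the triangle inequality $\|u\|_-\le\|Pu\|_-+\epsilon_h\|u\|_-$, so for $h$ small enough ($\epsilon_h\le 1/2$) we get $\|u\|_-\cleq\|Pu\|_-$. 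Chaining the three estimates gives $\|u\|_-\cleq\sup_{v\in V_h}|a_h(u,v)|/|v|$ uniformly in $h$, which is the claimed bound. I expect the main obstacle to be precisely this last step: the gap estimate of Proposition \ref{prop:gap} is an $O$-norm statement, and converting it into control of $\|u\|_-$ relies entirely on the observation that $X^-$ and $O$ induce the same norm on the kernel $W$.
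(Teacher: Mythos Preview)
Your proof is correct and follows essentially the same route as the paper: the equality is attributed to the fact that an operator and its adjoint share the same norm, and the lower bound is obtained by combining Proposition~\ref{prop:wis} with Proposition~\ref{prop:gap} to pass between $\|Pu\|_-$ and $\|u\|_-$. The paper states this last step tersely as ``Proposition~\ref{prop:gap} yields $\|u\|_- \ceq \|Pu\|_-$''; your observation that the $X^-$ and $O$ norms coincide on $W$ (since $\curltcurl$ vanishes there) is exactly the justification the paper leaves implicit, and your reduction of the supremum from $X_h$ to $V_h$ fills in another small step.
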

\begin{proof}
To obtain the uniform lower bound for the second term remark first that Proposition \ref{prop:gap} yields for $u_h \in V_h$:
\begin{equation}
\| u\|_- \approx \| P u\|_-.
\end{equation}
Then apply Proposition \ref{prop:wis}.

The equality reflects that a map and its adjoint have the same norm. 
\end{proof}

Therefore the abstract setting can be applied to prove (on toruses):
\begin{theorem}
Linearized Regge calculus yields a convergent method to approximate the eigenpairs of the Saint-Venant operator.
\end{theorem}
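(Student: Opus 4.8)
The plan is to verify, one hypothesis at a time, that the concrete data of linearized Regge calculus fit the abstract convergence framework, and then to invoke the abstract eigenpair-convergence proposition together with the spectral approximation theory of \cite{Cha83}. Concretely, I instantiate $O = \rmL^2(S) \otimes \bbS$, the form $a(u,v) = \langle \curltcurl u, v \rangle$, the scale $X^+ = X^{\alpha} \subset X = X^0 \subset X^- = X^{-\alpha}$ for a fixed $\alpha \in {]0,1/2[}$, and on the discrete side $X_h = X^1_h$ with the induced form $a_h$. The conclusion of the abstract proposition is the operator-norm convergence $\| K - K_h \|_{O \to O} \to 0$ of the solution operators; since $K$ and each $K_h$ are compact and selfadjoint on $O$, this is exactly condition (\ref{eq:conveig}) under which \cite{Cha83} guarantees convergence of eigenvalues and eigenspaces. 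Because the nonzero eigenvalues of $K$ (resp. $K_h$) are the inverses of those of (\ref{eq:eig}) (resp. (\ref{eq:galeig})) with the same eigenspaces, the nonzero eigenpairs of the discrete Saint-Venant problem then converge to their continuous counterparts, which is the assertion of the theorem.

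Most of the required hypotheses are already established. The continuous-side requirements — continuity of $a$ on $X$ and on $X^+ \times X^-$, closedness of $W$ in $O$, compactness of $V \to O$ and $V^- \to O$, and the inf–sup conditions on $V \times V$ and (\ref{eq:isap}) — are precisely the content of the proposition ``The required hypotheses are satisfied,'' proved there by diagonalizing $\curltcurl$ in the Fourier basis. On the discrete side, the projections $Q_h$ with the uniform bound $O \to O$ and the mapping property $Q_h W \subset W_h$ come from the regularized-interpolation construction $Q_h = (I_h R_h|_{X_h})^{-1} I_h R_h$, while $\delta(W_h, W) = 0$ follows from the already-noted inclusion $W_h \subset W$. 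The uniform weak inf–sup condition (\ref{eq:isan}) is exactly Corollary \ref{cor:infsupd}, which in turn rests on the scaling/trace estimate of Proposition \ref{prop:wis}.

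Two hypotheses still call for a short argument. First, approximability: for every $u \in O$ there must be $u_h \in X_h$ with $u_h \to u$ in $O$, which follows from the density of smooth symmetric fields in $O$ and the approximation property of $I_h$ on the $\rho_e$ basis, the relevant quantitative control being the equivalence $\| \sum_e v_e \rho_e \|_{\rmL^2(S)} \ceq h^{-1/2} (\sum_e v_e^2)^{1/2}$ from the proof of Proposition \ref{prop:wis}. Second, the consistency condition (\ref{eq:conan}): given $u \in X^+$, I would take $u_h = I_h R_h u$, the same regularized interpolant used for $Q_h$. Here the commuting diagram of Theorem \ref{theo:comm} is decisive: since $R_h$ commutes with constant-coefficient operators and $\curltcurl I^1_h = I^2_h \curltcurl$ on smooth fields, and $I^2_h$ is adjoint to $I^1_h$, one gets $a_h(u_h, v) = \langle I^2_h R_h \curltcurl u, v \rangle = \langle R_h \curltcurl u, v \rangle$ for $v \in X_h$. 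The defect $a(u,v) - a_h(u_h,v) = \langle (\id - R_h)\curltcurl u, v \rangle$ is then a pure regularization error, which tends to $0$ in the relevant dual norm because $R_h \to \id$ on the negative Sobolev space containing $\curltcurl u$ and because the edge-trace pairing is bounded in $\| \cdot \|_-$.

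The main obstacle — and the point the whole construction is built to overcome — is the non-conformity: $X_h$ is \emph{not} a subspace of $X$, since the matrix edge-measures making up $\curltcurl X_h \subset X^2_h$ fail to lie in $\rmH^{-1}(S) \otimes \bbS$ (edge traces are uncontrolled in $\rmH^1$). The remedy is the three-scale framework $X^+ \subset X \subset X^-$, into whose weaker space $X_h$ does embed; the quantitative heart of this is Proposition \ref{prop:wis}, showing that edge-trace functionals are bounded $\rmH^{1+\alpha}(S) \to \rmL^2(e)$ for $\alpha > 0$ and, dually, that the discrete curvatures are uniformly bounded in $\rmH^{-1-\alpha}(S)$. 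The genuinely hard part is that the weakened norm must still be strong enough to preserve the compactness of $V^- \to O$ and to yield the \emph{uniform} inf–sup (\ref{eq:isan}) rather than mere stability — the latter being known to be insufficient for eigenvalue convergence \cite{BofBreGas00} — and this is where the preceding propositions do the real work, so that the present proof reduces to collecting them.
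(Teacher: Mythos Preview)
Your proposal is correct and follows exactly the paper's approach: the paper's ``proof'' is the single sentence ``Therefore the abstract setting can be applied,'' relying on the preceding propositions (the Fourier-analysis proposition for the continuous hypotheses, the $Q_h$ construction, the inclusion $W_h \subset W$, and Corollary~\ref{cor:infsupd} for the discrete inf--sup) to verify each hypothesis of the abstract framework. You go slightly beyond the paper by supplying an explicit argument for consistency~(\ref{eq:conan}) --- which the paper dismisses with ``consistence of $a_h$ with $a$ is not in doubt'' --- and your route via $u_h = I_h R_h u$, the commuting diagram, and the adjointness of $I^1_h, I^2_h$ (giving $a_h(u_h,v) = a(R_h u, v)$ and hence a defect bounded by $\|u - R_h u\|_+ \|v\|_-$) is the natural way to make this precise.
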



\section*{Acknowledgments}
I thank Ragnar Winther for stimulating discussions and Douglas N. Arnold and Ragnar Winther for generously sharing their notes on the linearized Einstein equations with me. In particular this is where I learned of the relationship of these equations to elasticity.

This work, conducted as part of the award ``Numerical analysis and simulations of geometric wave equations''  made under the European Heads of Research Councils and European Science Foundation EURYI (European Young Investigator) Awards scheme, was supported by funds from the 
Participating Organizations of EURYI and the EC Sixth Framework Program.

\bibliography{../Bibliography/alexandria}{}
\bibliographystyle{plain}

\end{document}